\newcommand{\R}{\mathbb R}
\newcommand{\N}{\mathbb N}
\newcommand{\C}{\mathbb C}
\newcommand{\lesss}{\rotatebox[origin=c]{90}{$\land$}}
\newcommand{\less}{\ \lesss\ }
\newcommand{\biggg}{\rotatebox[origin=c]{90}{$\lor$}}
\newcommand{\bg}{\ \biggg\ }
\newcommand{\nc}{\newcommand}
\nc{\BCc}{{\mathbb{C}(\wp(z),\wp^\prime(z))}}
\nc{\BC}{{\mathbb C}}
\nc{\BQ}{{\mathbb Q}}
\nc{\BR}{{\mathbb R}}
\nc{\BZ}{{\mathbb Z}}
\nc{\BP}{{\mathbb P}}
\nc{\BN}{{\mathbb N}}
\nc{\BM}{{\mathbb M}}
\nc{\fH}{{\mathfrak{H}}}
\nc{\vp}{{\varepsilon}}\nc{\dpar}{{\partial}}\nc{\al}{{\alpha}}
\nc{\PSL}{PSL(2,\BR)}
\nc{\PS}{PSL(2,\BZ)}
 \nc{\CL}{PSL(2,\BZ/m\BZ)}
 \newtheorem{theorem}{Theorem}[section]
\newtheorem{corollary}[theorem]{Corollary}
\newtheorem{lemma}[theorem]{Lemma}
\theoremstyle{definition}
\newtheorem{definition}[theorem]{Definition}
\newtheorem{remark}[theorem]{Remark}
\newtheorem{example}[theorem]{Example}
\newcommand{\theoref}[1]{Theorem~\ref{#1}}
\newcommand{\propref}[1]{Proposition~\ref{#1}}
\newcommand{\lemref}[1]{Lemma~\ref{#1}}
\newcommand{\corref}[1]{Corollary~\ref{#1}}
\newcommand{\secref}[1]{Section~\ref{#1}}
\numberwithin{equation}{section}  
\begin{document}
\title{Geometric methods in partial differential equations}
\author{Ahmed Sebbar}

\address{Chapman University\\
One University drive\
Orange, CA 92866}

\email{sebbar@chapman.edu,}
\author{Daniele Struppa}

\address{Chapman University\\
One University drive\
Orange, CA 92866}

\email{struppa@chapman.edu,}
 \author{Oumar Wone}
\address{Chapman University 
  \\One University drive, 92866, Orange, CA}
     \email{wone@chapman.edu}

 \keywords{Laplace's equation, Cayley-Bacharach theorem, Cohomology of sheaves}
 \subjclass[2010]{14N05, 14H05, 35C15}
  \thanks{The third author was partly supported by Chapman University. He expresses his warm thanks
to D. C. Struppa for the invitation and the excellent intellectual atmosphere during his stay.}
\date{}
\begin{abstract}
We study the interplay between geometry and partial differential equations. We show how the fundamental ideas we use require the ability to correctly calculate the dimensions of spaces associated to the varieties of zeros of the symbols of those differential equations. This brings to the center of the analysis several classical results from algebraic geometry, including the Cayley-Bacharach theorem and some of its variants as Serret's theorem, and the Brill-Noether Restsatz theorem.\end{abstract}
\maketitle 
\tableofcontents
\section{Introduction}
Many linear differential equations have their solutions represented by integrals. All the classical orthogonal polynomial verify a three terms linear relation, and according to a theorem of Favard admit an integral representation by measures for which they orthogonal. Many partial differential differential such as the Laplace equation, the heat equation or the wave equation have also their solutions represented by integrals. This can be considered as particular situation or as a concrete application of the fundamental principle of Ehrenpreis, that every solution of a system (in general, overdetermined) of homogeneous partial differential equations with constant coefficients can be represented as the integral with respect to an appropriate measure over the characteristic variety of the system. The later is an algebraic set, related to the symbol of the system. The main goal of this paper is We find that the theory was already present in its infancy in Bateman \cite{bateman1904}, where he showed how to give an integral representation for the solutions of the Laplace equation. Since the argument is essentially a counting argument. It is then not surprising that in order to obtain solution of suitable differential equations, we are forced to delve into existence theorems of the classical theory of algebraic curves, the Riemann-Roch theorem, the Cayley-Bacharach theorem, Serret’s theorem, and more generally those theorems that give us the dimensions of suitable cohomology groups, such as the Brill-Noether Restsatz theorem.

The plan of the paper is as follows: \secref{hitchin} is devoted to the use of differential geometry in partial differential equations, where we interpret following \cite{hitchin1982} the Laplace and ultrahyperbolic equations in terms of vector bundles. In \secref{bateman1} we study integral representations of general partial differential equations in three variables, whose symbols are homogeneous polynomials and we recover the results of \cite{hitchin1982, john1938}. We finally generalize the theory of \secref{bateman} to more than three variables in \secref{batemannew}. The paper concludes with one appendice of abstract material that is necessary to fully formalize our approach, but that we have isolated in an appendice to facilitate the reading of the rest of the article.
 
 The novelty of our presentation lies in the fact that throughout the paper we will always emphasize the discussion from the point of view of partial differential equations. We should conclude this introduction to point out that our own interest in this topic stems from our desire to better understand the interconnections between the theory of twistors and the fundamental principle of Ehrenpreis-Palamodov-Malgrange, at least in the case of elliptic and strictly hyperbolic systems. This interest was stimulated by our reading of Ehrenpreis’ \cite{leon1}, \cite{leon2} and we are currently investigating further how to clarify those interconnections.

\section{Differential geometry and partial differential equations}
\label{hitchin}
This part was inspired to us by \cite{hitchin1982}. It has been known for some time that problems in real differential geometry can often be simplified by using complex coordinates. For example in the plane $\R^2$ we can write $z=x+iy$ and thereby identify $\R^2\simeq\C$. We then discover that a $C^2$ function $f:\R^2\to\R$ is harmonic if and only if we can write it as
$$f=\psi+\overline{\psi}$$
where $\psi:\C\to\C$ is a holomorphic (complex-analytic) function. This is because a $C^2$ real-valued function $f$ is harmonic if and only if its laplacian is zero and the Laplace operator $\dfrac{\partial^2}{\partial x^2}+\dfrac{\partial^2}{\partial y^2}$ is proportional to the operator $\dfrac{\partial^2}{\partial z\partial \overline{z}}$. This shows us how to connect harmonic real-valued functions-an object from real differential geometry in the plane- to holomorphic functions of one complex variable- the natural objects  of complex analysis. If we try the same technique in $\R^3$ we have to accept the fact that odd dimensional spaces cannot be identified with complex spaces $\C^n$, for any integer $n$. We can however form another  space closely associated to the geometry of $T=\R^3$ that is intrinsically complex, and this is the fundamental idea behind twistor theory. Consider therefore the space $M$ of all oriented lines of $\R^3$. The generic element of the space $M$ is the oriented line $L(u,v)$ given by
$$L(u,v)=\{v+tu,t\in\R\}$$
where $||u||=1$ and $u$, $v\in\R^3$. Consider now the tangent bundle of the $2$-sphere $S^2$ defined by
$$TS^2=\{(u,v)\in\R^3\times\R^3:\|u\|=1,(u,v)=0\}$$
with $(u,v)$ denoting the Euclidean scalar product of $u$ and $v$. We can now define a bijection 
\begin{equation*}
\begin{split}
M&\to TS^2\\
L(u,v)&\mapsto(u,v-(v,u)u)
\end{split}
\end{equation*}
where the second component is the point on $L(u,v)$ closest to the origin of $T=R^3$. Remark that the map is indeed $TS^2$-valued and is clearly surjective. It is injective because if $(u,v-(v,u)u)=(u_1,v_1-(v_1,u_1)u_1)$ then $u=u_1$ and $v-v_1=(v-v_1,u)u$ which gives $L(u,v)=L(u_1,v_1)$. The mapping and its inverse mapping $(u,v)\in TS^2\mapsto L(u,v)\in M$ are smooth, a fact that shows that $M$ and $S^2$ are at least diffeomorphic. 

To get to the next stage, we recall that  the unit sphere $S^2$ can be endowed with a structure of complex manifold by choosing a covering atlas $\{U_0,U_1\}$, where $U_0=S^2\setminus\{(0,0,1)\}$ and $U_1=S^2\setminus\{(0,0,-1)\}$. We define complex coordinates on $U_0$ by
$$\xi_0(x,y,z)=\dfrac{x+iy}{1-z}$$
 which is the stereographic projection of the point $(x,y,z)$ from the north pole and on $U_1$ by
$$\xi_1(x,y,z)=\dfrac{x-iy}{1+z}$$
  which is the stereographic projection of the point $(x,y,z)$ from the south pole. We have by construction
  $$\xi_0(x,y,z)=\dfrac{1}{\xi_1(x,y,z)}=F(\xi_1(x,y,z))$$
  where $F(w)=\dfrac{1}{w}$, on $U_0\cap U_1$. This defines a complex structure on $S^2$. To define a complex structure on $TS^2$ we use standard constructions in differential geometry; for example a chart on $TS^2$ corresponding to the chart $\xi_0$ is given in local coordinates $(u,v)$ where $u\in U_0$ and $v\in \R^3$ by
 \begin{equation}
 \label{coor} 
 (\xi(u,v),\,\eta(u,v))=\left(\dfrac{u_1+iu_2}{1-u_3},\,d\xi_u(v)=\dfrac{v_1+iv_2}{1-u_3}+\dfrac{(u_1+iu_2)v_3}{(1-u_3)^2}\right).
 \end{equation}
  By definition the points of $M$ are oriented lines on $\R^3$. Moreover any point $p$ in $\R^3$ defines a $2$-sphere of lines, namely all oriented lines going through that point. Specifically the set of all lines through $p$ is the set of all $(u,v)\in S^2\times \R^3$ satisfying
  $$v=p-(p,u)u.$$
  We call this a {\it real section} of $M$ and denote it by $X_p$. Let us explore in more detail the geometry of these real sections.
  
  First we observe that these $X_p$ are called sections because the map
  \begin{equation}
  \begin{split}
  \rho_p:&S^2\to M\\
  &u\mapsto(u,p-(p,u)u)
  \end{split}
  \end{equation}
  defines a section of the projection $\pi:M\to S^2$ (namely $\pi(\rho_p(u))=u$, for all $u\in S^2$), and, with some abuse of notation, the image of this section is $X_p$. To understand why we called the sections $X_p$ {\it real} sections, we need to define a real structure on $M$, through a map
  $$\tau:M\to M$$
called a real structure. This map is defined as the involution that sends an oriented line to the same line with opposite orientation, i.e.
  $$\tau(u,v)=(-u,v).$$
  This real structure fixes the set $X_p$ because
  $$\tau(u,p-(p,u)u)=(-u,p-(p,u)u)=(-u,p-(p,-u)(-u)),$$ 
  and this explains why  $X_p$ is called a real section. 
  
  If $p=(x,y,z)$ is a point of $\R^3$ then $X_p$ is the set of all $(u,v)$ that correspond to lines through $p$:
  $$X_p=\{(u,p-(p,u)u),u\in S^2\}.$$
  If we substitute $v=p-(p,u)u$ into equation \eqref{coor} and simplify, we see that the equation of $X_p$ as a subset of $M$ is
 \begin{equation}
 \label{coor1} 
 \eta=\dfrac{1}{2}((x+iy)+2z\xi-(x-iy)\xi^2)
 \end{equation}
  when we identify $X_p$ with its image by the local chart given in equation \eqref{coor}. Hence under a similar identification, in coordinates, $\rho_p$ is given by
  $$\rho_p(\xi)=(\xi,\dfrac{1}{2}((x+iy)+2z\xi-(x-iy)\xi^2)).$$ 
   We will call any section that can be written in this way a holomorphic section. It is then possible to show that all holomorphic sections $S^2\to M$ take the form
   $$\xi\mapsto(\xi,a+b\xi+c\xi^2), a,b,c\in\C$$
   in local coordinates (this is because the holomorphic line $T\C P_1$ is the line bundle $\mathscr{O}(2)$ whose holomorphic sections are given by degree two homogeneous complex polynomials in two variables, hence by a second degree trinomial in non-homogeneous coordinates). With our choice of coordinates and the definition of a real structure one can show that if $(\xi,\eta)$ are the coordinates of a point $m\in M$, then $\left(\dfrac{-1}{\overline{\xi}},-\dfrac{\overline{\eta}}{\overline{\xi}^2}\right)$ are the coordinates of $\tau(m)$. So $\tau$ is anti-holomorphic. Therefore a section is real (i.e. invariant under the anti-holomorphic involution $\tau$) if and only if the equation 
   $$\eta=a+b\xi+c\xi^2$$
defines the same subset of $M$ as
$$-\dfrac{\overline{\eta}}{\overline{\xi}^2}=a+b\left(\dfrac{-1}{\overline{\xi}}\right)+c\dfrac{1}{\overline{\xi}^2}.$$  
  This immediately implies that $a=-\overline{c}$ and that $b$ is real. Hence the real sections defined by points of $\R^3$ as in  equation \eqref{coor1} are precisely all the real sections  of $M$. Thus we have a surjection between points of $\R^3$ and real sections of $M$. 
  The correspondence we have now established between $\R^3$ and $T$ is completely symmetric: points in $M$ define special subsets (oriented lines) in $\R^3$ and points in $\R^3$ define spacial subsets (holomorphic real sections) in $M$.
  
Set $\omega=g(\xi,\eta)d\xi$ a differential one form on $M$. If
$$\phi(x,y,z)=\int g(\xi,\dfrac{1}{2}((x+iy)+2z\xi-(x-iy)\xi^2))d\xi$$ 
  and we differentiate under the integral sign we have
  $$\dfrac{\partial^2\phi}{\partial x^2}+\dfrac{\partial^2\phi}{\partial y^2}+\dfrac{\partial^2\phi}{\partial x^2}=0$$
  that is $\phi$ is harmonic.

Consider now the ultrahyperbolic equation in $\R^4$ given by
$$\dfrac{\partial^2\varphi}{\partial x\partial y}-\dfrac{\partial^2\varphi}{\partial s\partial z}=0.$$
Let $T=\R^3$ and $f:\R^3\to\R$ be an arbitrary element of the Schwartz space $S(\R^3)$ and identify locally $M$ with $\R^4$. Choose local coordinates $(s,x,y,z)$ for $M$, on the open set where the third coordinate of $v$ does not vanish or again on the open set of lines which do not lie on the planes of constant $x_3$. A typical line in this open set is given by 
$$L=\{(s+ty,x+tz,t),t\in\R\}.$$
Define a function $\varphi$ on $M$ by 
$$\varphi(L)=\displaystyle\int_Lf$$
which gives in coordinates
$$\varphi(s,x,y,z)=\displaystyle\int_{-\infty}^{+\infty}f(s+ty,x+tz,t)dt.$$
Now there are four variables $s,x,y,z$ and $f$ is defined on $\R^3$ so we expect a differential condition on $\varphi$ (constraint). Indeed differentiating under the integral sign one has 
$$\dfrac{\partial^2\varphi}{\partial x\partial y}-\dfrac{\partial^2\varphi}{\partial s\partial z}=\displaystyle\int_{-\infty}^{+\infty}t(\dfrac{\partial^2}{\partial x\partial y}-\dfrac{\partial^2}{\partial x\partial y})f(s+ty,x+tz,t)dt=0.$$
It is natural to ask if this procedure, which goes under the name of John transform, can be inverted. This the case as shown by John in \cite{john1938}.

This example illustrates the defining philosophy of "twistor" theory. Namely, an unconstrained function on "twistor" space $T$ yields the solution to a differential equation on Minkowski space $M$, by means of an integral transform. We also have a simple geometric correspondence, another characteristic feature of twistor methods. More precisely we see
\begin{equation*}
\begin{split}
T&\longleftrightarrow M\\
\{\text{point in T}\}&\longrightarrow\{\text{oriented lines through point}\}\\
\{\text{line in T}\}&\longleftarrow \{\text{point in M}\}.
\end{split}
\end{equation*}
\begin{remark}
We recall that the tautological line bundle $H$ over $\C P_1$, also denoted by $\mathscr{O},$ is the holomorphic line bundle whose fibre over a point $\left[z\right]=\left[z_0:z_1\right]$ is given by the line $\left[z\right]$. This can be written also as $H=\{(\left[z\right],w)\,|\,w=\lambda z,\lambda\in \C-\{0\}\}\subset \C P_1\times \C^2$. The projection map $\pi:H\to\C P_1$ is the restriction of the projection from $\C P_1\times \C^2$, that is $(\left[z\right],w)\mapsto\left[z\right]$. By covering $\C P_1$ with the two open sets $U_0=\{\left[z\right]=\left[z_0:z_1\right],z_0\neq0\}$ and $U_1=\{\left[z\right]=\left[z_0:z_1\right],z_1\neq0\}$ we see that the transition function for the line bundle $H$ is given by
\begin{equation*}
\begin{split}
g_{01}&:U_0\cap U_1\to C^\times,\\
&\left[z\right]\mapsto\dfrac{z_1}{z_0}.
\end{split}
\end{equation*}
This follows from the fact that one can define local sections $\psi_i:U_i\to H,\,i\in\{0,1\}$ by 
$$\psi_0(\left[z\right])=(\left[z\right],(1,\dfrac{z_1}{z_0}))$$
and
$$\psi_1(\left[z\right])=(\left[z\right],(\dfrac{z_0}{z_1},1)).$$
And one sees that $\psi_0(\left[z\right])=\dfrac{z_1}{z_0}\psi_1(\left[z\right])$. By dualizing the line bundle $H$ one obtains the line bundle $\mathscr{O}(1)$ with transition function $g^\star_{01}(\left[z\right])=\dfrac{z_0}{z_1}$ and taking the tensor product of $\mathscr{O}(1)$ with itself one gets the line bundle $\mathscr{O}(2)$ with transition function given by $\left[z\right]\mapsto \left(\dfrac{z_0}{z_1}\right)^2$. 

Set $\xi=\dfrac{z_0}{z_1}$ on $U_1$ and $w=\dfrac{z_1}{z_0}$ on $U_0$, the two coordinates associated to $U_1$ and $U_0$ respectively. We have $\xi=\dfrac{1}{w}$ on $U_0\cap U_1$. This gives $d\xi=-\dfrac{1}{w^2}dw$, i.e. $dw=-\xi^{-2}d\xi$, and therefore $\partial_w=-\xi^2\partial_{\xi}$. This shows that the line bundles $\mathscr{O}(2)$ and $T\C P_1$ on $\C P_1$ have the same transition functions and as a consequence they are isomorphic.
\end{remark}
\section{The solution of partial differential equations by means of definite integrals}
\label{bateman1}

We want to begin this section with what Atiyah regarded as the beginning of twistor theory, intended as 
the representation of solutions of linear homogeneous (as a polynomial in the partial derivatives) partial differential equations with constant coefficients on $\R^n$ or $\C^n$ by means of definite integrals. We will soon need some results from classical algebraic geometry, but we begin here with a relatively simple example where all the calculations can be made explicit.
\subsection{The Laplace equation and some its solutions}
Consider two points  $P=(a,b,c)$ and $M=(x,y,z)$ in the usual Euclidean space $\R^3$, and assume they are subjected to Newtonian attraction, with $P$ being the attracting point, and $M$ the attracted one. By a suitable normalization we have that the force exerted by $P$ on $M$ is $\vec{F}=-\dfrac{\overrightarrow{PM}}{\|\overrightarrow{PM}\|^{3}}$. We set $r=\|\overrightarrow{PM}\|=\displaystyle\sqrt{(x-a)^2+(y-b)^2+(z-c)^2}$. Then the components of the force $\vec{F}$ are 
$$X=-\dfrac{x-a}{r^3},\quad Y=-\dfrac{y-b}{r^3}, \quad Z=-\dfrac{z-c}{r^3},$$
and this attraction derives from the potential
$$U(x,y,z)=\dfrac{1}{r},$$
since
$$\dfrac{\partial U}{\partial x}=\dfrac{\partial U}{\partial r}\dfrac{\partial r}{\partial x}=-\dfrac{x-a}{r^3}$$
(the same calculation holds for the other partial derivatives of $U$). 

If instead of an attracting point $P$, one has a finite attracting volume $V$, then the potential $U$ is given for points $(x,y,z)$ lying outside the volume $V$ by 
$$U(x,y,z)=\displaystyle\int\int\int_V\dfrac{dadbdc}{\sqrt{(x-a)^2+(y-b)^2+(z-c)^2}}.$$

By computing now the derivatives of $U(x,y,z)$ we obtain
$$\dfrac{\partial U}{\partial x}=-\displaystyle\int\int\int_V\dfrac{(x-a)dadbdc}{\left[(x-a)^2+(y-b)^2+(z-c)^2\right]^{\frac{3}{2}}}$$
and 
\begin{equation*}
\label{ }
\begin{split}
\dfrac{\partial^2U}{\partial x^2}&=-\displaystyle\int\int\int_V\dfrac{dadbdc}{\left[(x-a)^2+(y-b)^2+(z-c)^2\right]^{\frac{3}{2}}}\\
&+3\displaystyle\int\int\int_V\dfrac{(x-a)^2dadbdc}{\left[(x-a)^2+(y-b)^2+(z-c)^2\right]^{\frac{5}{2}}}.
\end{split}
\end{equation*}
This gives
$$\dfrac{\partial^2U}{\partial x^2}+\dfrac{\partial^2U}{\partial y^2}+\dfrac{\partial^2U}{\partial z^2}=0$$
and so
$$\Delta U=0.$$

In the introduction we made reference to how Whittaker, \cite{whittaker1927, whittaker1902}, found a way to write some solutions to this Laplace equation by means of definite integrals. We will now review in detail how that can be achieved.
 
The first observation is the fact that the Laplace differential operator $\Delta=\dfrac{\partial^2}{\partial x^2}+\dfrac{\partial^2}{\partial y^2}+\dfrac{\partial^2}{\partial z^2}$ is elliptic. This means that there is no non-zero element of $R^3$ which satisfies $x^2+y^2+z^2=0$.  Hence by the elliptic regularity theorem any solution to the Laplace partial differential equation is real analytic.

Let then $U(x,y,z)$ be a solution to the Laplace differential equation expressed as a convergent power series with respect to the three variables $x$, $y$, $z$, in the neighborhood of a given point $x_0$, $y_0$, $z_0$, and set
$$x=x_0+X,\quad y=y_0+Y,\quad z=z_0+Z.$$
The series 
$$U=a_0+a_1X+b_1Y+c_1Z+a_2X^2+b_2Y^2+c_2Z^2+2d_2YZ+2e_2ZX+2f_2XY+\ldots$$
is therefore convergent for $|X|+|Y|+|Z|$ sufficiently small. To determine the coefficients $a_0$, $a_1$, $\ldots$, we will calculate the second order partial derivatives of $U$ with respect to $X$, $Y$, $Z$, and put the resulting expressions in the equation
$$\dfrac{\partial^2U}{\partial X^2}+\dfrac{\partial^2U}{\partial Y^2}+\dfrac{\partial^2U}{\partial Z^2}=0.$$
By identification, this will give us linear relations from which we can deduce the values of the coefficients. Now note that if  we consider, in the series of $U$, the homogenous part $U_n$ of degree $n$ in $X$, $Y$, $Z$, the number of its coefficients is $\dfrac{(n+1)(n+2)}{2}$, because this is the dimension of the space of homogeneous polynomials of degree $n$ in three variables. As the laplacian is of second order, when $n\geqslant2$, its action on $U_n$ will give a homogeneous polynomial of degree $n-2$. This term has to vanish identically so we have $\dfrac{n(n-1)}{2}$ (dimension of the space of homogeneous polynomials of degree $n-2$ in three variables) linear relations among the coefficients of $U_n$. Therefore in the terms of degree $n$ of $U_n$, there will be 
$$\dfrac{(n+1)(n+2)}{2}-\dfrac{n(n-1)}{2}=2n+1$$
arbitrary coefficients when $n\geqslant2$ (the dimension of the space of homogeneous polynomials satisfying the Laplace equation is $2n+1$). Note that for $n=0,\,1$, $\dfrac{(n+1)(n+2)}{2}=2n+1$, and so there are $2n+1$ arbitrary coefficients in $U_n$ regardless of the value of $n.$ By superposition these terms will be linear combinations of $2n+1$ particular polynomial solutions, of degree $n$, to the Laplace equation. Let us look for such solutions (a basis of solutions).

For that let us start with the expression
$$E_n:=(Z+iX\cos(u)+iY\sin(u))^n,\,u\in\R,$$ 
which is clearly a solution to the Laplace equation of degree $n$. We can develop $E_n$ into a Fourier series because it is smooth and $2\pi$-periodic in $u$. This gives
$$\displaystyle \sum_{0}^\infty g_m(X,Y,Z)\cos(mu)+\sum_{0}^\infty h_j(X,Y,Z)\sin(ju);$$
with coefficients $g_m$ and $h_j$ linearly independent polynomials in $X,Y,Z$. 

However the development in Fourier series of $E_n$ contains only a finite number of terms. This follows by computing $E_n$ via the binomial formula, by linearizing the various powers of $\cos(u)$, $\sin(u)$ and by uniqueness of the Fourier expansion of a continuous $2\pi$-periodic function. Therefore one can write
$$E_n=\displaystyle \sum_{0}^n g_m(X,Y,Z)\cos(mu)+\sum_{1}^nh_j(X,Y,Z)\sin(ju)$$
where by Fourier one has
$$\pi g_m(X,Y,Z)=\displaystyle\int_{-\pi}^\pi(Z+iX\cos u+iY\sin u)^n\cos (mu) du$$
$$\pi h_j(X,Y,Z)=\displaystyle\int_{-\pi}^\pi(Z+iX\cos u+iY\sin u)^n\sin (ju) du.$$
To show that $E_n$ may be written in such a form one can use an induction based on the classical formulas, valid for  $a,b\in\R$
\begin{equation*}
\begin{split}
&\cos(a)\cos(b)=(\cos(a-b)+\cos(a+b))/2,\quad\sin(a)\sin(b)=(\cos(a-b)-\cos(a+b))/2\\
&\cos(a)\sin(b)=(\sin(a+b)-\sin(a-b))/2.
\end{split}
\end{equation*}
We remark that the $g_m$ are even in $Y$ and that the $h_j$ are odd in $Y$. For instance one has by definition
$$\pi g_m(X,-Y,Z)=\displaystyle\int_{-\pi}^\pi(Z+iX\cos u-iY\sin u)^n\cos (mu) du;$$
by setting $u=-v$, we obtain
$$\pi g_m(X,Y,Z)=\displaystyle\int_{\pi}^{-\pi}(Z+iX\cos v+iY\sin v)^n\cos (mv) (-dv)=\pi g_m(X,-Y,Z).$$
Also the highest power of $Z$ present in $g_m$ or $h_j$ is $n-m$ (respectively $n-j$). To see this one may use an induction based on the formula
 $$E_n=\displaystyle \sum_{0}^n g_m(X,Y,Z)\cos(mu)+\sum_{1}^nh_j(X,Y,Z)\sin(ju)$$
and the fact that 
$$E_{n+1}=(Z+iX\cos(u)+iY\sin(u))E_n.$$
Now we can use these properties of $g_m$ and $h_j$ to show that they are linearly independent (and therefore form a basis of the vector space of homogenous polynomials in $X,Y,Z$ solution to the Laplace equation, $X,Y,Z$ are still considered to be sufficiently small). Let $\lambda_0$, $\lambda_1$, $\ldots$, $\lambda_n$ and $\mu_1$, $\mu_2$, $\ldots$, $\mu_n$ be scalars such that
$$\lambda_0g_0+\ldots+\lambda_ng_n+\mu_1h_1+\ldots\mu_nh_n=0.$$
Then since the $g_m$ are even and the $h_j$ are odd, we have separately
$$\lambda_0g_0+\ldots+\lambda_ng_n=0$$
and
$$\mu_1h_1+\ldots\mu_nh_n=0.$$
Therefore from the fact that $g_m$ and $h_m$ are of degree $n-m$ in $Z$, one deduces immediately that all the coefficients $\lambda_m$ and $\mu_m$ are zero.
 
This being said every linear combination of the independent $2n+1$ solutions can then be put in the form
$$\displaystyle\int_{-\pi}^\pi(Z+iX\cos u+iY\sin u)^nf_n(u) du.$$
Here for each $n$ $f_n(u)=\frac{1}{\pi}(\sum_{0}^n\alpha_m\cos(mu)+\sum_{1}^n\beta_j\sin(ju))$, for some $\alpha_m$ and $\beta_j$. Assuming that $|X|+|Y|+|Z|<B<1$ (for instance) and choosing $D>0$ such that for each $n$ $|(\alpha_m)_{0\leqslant m\leqslant n}|<D$ and $|(\beta_j)_{1\leqslant j\leqslant n}|<D$ we shall have
$$U(X,Y,Z)=\sum_{0}^\infty\displaystyle\int_{-\pi}^\pi(Z+iX\cos u+iY\sin u)^nf_n(u) du\equiv\displaystyle\int_{-\pi}^\pi \Phi(Z+iX\cos u+iY, u) du,$$
for $\Phi$ a suitable well-defined function in two variables. We have therefore obtained a local integral representation of some solutions to the Laplace equation in $\R^3$.
\subsection{The kernel of the partial differential operator $F\left(\frac{\partial}{\partial x},\frac{\partial}{\partial y},\frac{\partial}{\partial z}\right)$}
\label{bateman}
In this subsection we will show how to generalize the result of the previous subsection to the case in which the Laplacian is replaced by another partial differential operator, whose symbol is still a homogeneous polynomial in three variables. As we will see, it is not so easy to calculate the dimensions of the spaces of coefficients in the series expansion of the solution, and therefore we have to resort to some pretty significant results from classical algebraic geometry.

To this end, we introduce some classical terminology before being able to prove the theorem stated later in this section. Let $C$ be a smooth projective plane algebraic curve and let $\tilde{C}$ be its associated compact Riemann surface. A divisor $D$ on $C$ is a formal sum $D=\sum_{p\in C} n_p p$ with $n_p$ an element of the set of integers and all but a finite number of the $n_p$'s are equal to zero. The degree of a divisor $D$ is defined by $\deg{D}:=\sum_{p\in C}n_p$. A divisor $D=\sum_{p\in C} n_p p$ is called effective, and we will write $D\geq 0$, when $n_p\geq 0$ for all $p\in C$. Two divisors $D$ and $D^\prime$ are linearly equivalent if there exists a rational function $f$ on $C$ such that $D-D^\prime=(f)$, where $(f)$ stands for $f^{-1}(0)-f^{-1}(\infty)$. Equivalence between divisors is easily seen to be an equivalence relation. One important example of a divisor class on a smooth algebraic curve is the  class of the divisor of any given rational differential on the curve. It will be called the canonical divisor class and will be denoted $K$. Let $\mathscr{K}\left(C\right)$ be the field of rational functions on $C$. Let us introduce now the following vector space associated to a given divisor $D$
\begin{equation}
\label{R1}
L(D)=\{0\}\cup\{f, f\in \mathscr{K}\left(C\right), (f)+D\geq0\}.
\end{equation}
The dimension of $L(D)$ is denoted by $l(D)$. Note that $l(D)$ only depends on the divisor class of $D$. The fundamental theorem which enables one to compute $l(D)$ in general is the Riemann-Roch theorem 
\begin{theorem}[R-R, \cite{griffiths1985}]
\label{RR}
Let $C$ be a smooth projective plane algebraic curve of degree $d$, $D$ a divisor on $C$, $K$ the canonical divisor class of $C$ and $g$ the genus of $C$. Then
$$l(D)=\deg{D}+1-g+l(K-D).$$
\end{theorem}
The following result is a simple consequence of the Riemann-Roch theorem. 
\begin{theorem}
Any rational function $f$ on a nonsingular curve plane $C$ can be written as the quotient of two homogeneous polynomials in three variables and of the same degree, restricted to $C$:
$$f=\displaystyle\dfrac{Q}{R}|_C$$ 
\end{theorem}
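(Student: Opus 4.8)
The plan is to deduce the statement from the Riemann--Roch theorem by a dimension count: for $m$ large, the degree-$m$ forms restricted to $C$ span the whole space $L(mH_0)$, where $H_0$ is a fixed hyperplane section, and once this is known the result follows by clearing poles. First I would fix homogeneous coordinates $x_0,x_1,x_2$ so that the line $x_0=0$ does not contain $C$. Since $C$ is smooth and connected, hence irreducible of some degree $d$, this is possible; and when $d=1$ the curve is a line and the assertion reduces to the classical fact that every rational function on $\mathbb{P}^1$ is a quotient of two binary forms of equal degree, so I may assume $d\ge 2$. Let $F$ be the defining polynomial of $C$, so that $I(C)=(F)$, and $H_0=\mathrm{div}_C(x_0)$, effective of degree $d$ by B\'ezout. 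The map $\rho_m$ sending a homogeneous polynomial $Q$ of degree $m$ to $(Q/x_0^m)|_C\in\mathscr{K}(C)$ lands in $L(mH_0)$ and has kernel the multiples of $F$, of dimension $\binom{m+2-d}{2}$; hence its image has dimension $\binom{m+2}{2}-\binom{m+2-d}{2}$. On the other hand, for $m$ large (so that $\deg(mH_0)=md>2g-2$) Riemann--Roch gives $l(mH_0)=md+1-g$, and since $g=\tfrac{(d-1)(d-2)}{2}$ for a smooth plane curve one checks that $\binom{m+2}{2}-\binom{m+2-d}{2}=md+1-g$. A linear map that is injective modulo its kernel between finite-dimensional spaces of the same dimension is onto, so $\rho_m$ surjects onto $L(mH_0)$. (Equivalently, this surjectivity --- the projective normality of smooth plane curves --- follows from the exact sequence $0\to\mathscr{O}_{\mathbb{P}^2}(m-d)\xrightarrow{\ \cdot F\ }\mathscr{O}_{\mathbb{P}^2}(m)\to\mathscr{O}_C(m)\to 0$ and $H^1(\mathbb{P}^2,\mathscr{O}_{\mathbb{P}^2}(m-d))=0$.)

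Granting this, I would finish as follows for a given nonzero $f\in\mathscr{K}(C)$ (the case $f=0$ being trivial). Write $(f)=(f)_0-(f)_\infty$ with $(f)_\infty=\sum_{i=1}^k n_i p_i\ge 0$ of degree $n=\sum_i n_i$. For each $i$ pick a line $\ell_i$ through $p_i$, so that $\mathrm{div}_C(\ell_i)\ge p_i$ and $\mathrm{div}_C(\ell_i^{n_i})\ge n_i p_i$; fixing $m\ge n$ large enough for the surjectivity above and an auxiliary line $\ell$ with $C\not\subset\ell$, set $R=\ell_1^{n_1}\cdots\ell_k^{n_k}\,\ell^{m-n}$. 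Then $R$ is homogeneous of degree $m$, $R|_C\not\equiv 0$, and $\mathrm{div}_C(R)\ge (f)_\infty$. Consequently the rational function $\phi:=f\cdot (R/x_0^m)|_C$ has divisor $(f)_0-(f)_\infty+\mathrm{div}_C(R)-mH_0\ge (f)_0-mH_0\ge -mH_0$, i.e. $\phi\in L(mH_0)$, so by surjectivity of $\rho_m$ there is a homogeneous $Q$ of degree $m$ with $(Q/x_0^m)|_C=\phi$. Dividing gives $(Q/R)|_C=\phi/\big((R/x_0^m)|_C\big)=f$ with $\deg Q=\deg R=m$, as wanted.

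The construction of $R$ (products of lines through the poles of $f$, padded by an auxiliary line to reach any prescribed large degree) and the closing divisor bookkeeping are routine. I expect the one genuine obstacle to be the surjectivity of $\rho_m$, i.e. the identity $\binom{m+2}{2}-\binom{m+2-d}{2}=md+1-g$ together with the fact that it forces the inclusion of linear systems to be an equality; beyond Riemann--Roch this uses the genus formula and B\'ezout (or, in the sheaf-theoretic variant, the vanishing $H^1(\mathbb{P}^2,\mathscr{O}_{\mathbb{P}^2}(j))=0$). It is also worth recording at the outset that irreducibility of $C$ is what guarantees $R|_C\not\equiv 0$ for a nonzero product of lines, which is needed for the ratio $(Q/R)|_C$ to make sense throughout.
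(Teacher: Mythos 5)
Your argument is correct and complete: the identity $\binom{m+2}{2}-\binom{m+2-d}{2}=md+1-g$ checks out, the surjectivity of $\rho_m$ (projective normality) follows as you say, and the pole-clearing step with products of lines is sound once $d\ge 2$ guarantees no line contains $C$. For comparison, the paper offers no proof at all of this statement --- it only remarks that the result ``is a simple consequence of the Riemann--Roch theorem'' --- so your write-up is a faithful and fully detailed realization of precisely the route the authors had in mind, including the one nontrivial input (Riemann--Roch plus the genus formula, or equivalently $H^1(\mathbb{P}^2,\mathscr{O}_{\mathbb{P}^2}(j))=0$) that makes the dimension count close.
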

Let $L$ be a line on $\C P_2$ not containing $C$; we set $H:=\sum_{p\in L\cap C}I_p(C,L)p$, where $I_p(C,L)$ is the intersection multiplicity at $p$ between $C$ and $L$. More generally, let $X$ be any plane curve intersecting $C$ only in isolated points; then we define the divisor cut on $C$ by $X$, denoted by $C\cdot X$, by the formula $C\cdot X:=\sum_{p\in X\cap C}I_p(C,X)p$. Remark that any two such divisors $C\cdot X$ and $C\cdot X^\prime$ associated to different such curves $X$ and $X^\prime$ of the same degree, are linearly equivalent. This is because if $X=\{F=0\}$ and $X^\prime=\{F^\prime=0\}$ then $C\cdot X-C\cdot X^\prime=\left(\dfrac{F}{F^\prime}|_C\right)$.

 Let us now recall the Bézout's theorem
\begin{theorem}[Bézout]
If $C$ is a smooth plane curve of degree $d$. If $X$ is a plane curve of degree $e$ not containing $C$, then the degree of the divisor $C\cdot X$ cut by $X$ on $C$ is $d\cdot e$.
\end{theorem}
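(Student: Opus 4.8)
The plan is to reduce the assertion to the special case where $X$ is a union of $e$ lines, and then to the case of a single line. Two ingredients make this possible. First, as observed just above, any two divisors $C\cdot X$ and $C\cdot X'$ cut on $C$ by plane curves of the same degree $e$ are linearly equivalent, since $C\cdot X-C\cdot X'=\left(\tfrac{F}{F'}\big|_C\right)$ when $X=\{F=0\}$ and $X'=\{F'=0\}$. (Note also that, $C$ being irreducible, $X\cap C$ is automatically finite as soon as $X$ does not contain $C$, so the divisor $C\cdot X$ is well defined under the hypotheses.) Second, linearly equivalent divisors on $\tilde C$ have the same degree: indeed $l(D)$ depends only on the class of $D$, and the same is true of $l(K-D)$ since $K-D\sim K-D'$ whenever $D\sim D'$; subtracting the two corresponding instances of \theoref{RR} then gives $\deg D=\deg D'$. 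Consequently $\deg(C\cdot X)$ depends only on $e=\deg X$, and it suffices to evaluate it on one convenient curve of each degree.

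Next I would take $X=L_1\cup\cdots\cup L_e$ to be a union of $e$ distinct lines, each chosen different from $C$ (which is possible: if $d\ge 2$ no line lies on $C$ at all, and if $d=1$ one only has to avoid the single line $C$). Then $X$ does not contain $C$ and meets it in finitely many points, and $X=\{F_X=0\}$ with $F_X=\ell_1\cdots\ell_e$ a product of linear forms. By the multiplicativity of intersection multiplicities, $I_p(C,X)=\sum_{i=1}^e I_p(C,L_i)$ at every point $p$, so summing over $p$,
$$\deg(C\cdot X)=\sum_{i=1}^e\deg(C\cdot L_i).$$
Thus everything comes down to showing $\deg(C\cdot L)=d$ for a single line $L$ not lying on $C$.

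For that, write $C=\{F=0\}$ with $F$ homogeneous of degree $d$, fix a linear parametrization $[s:t]\mapsto\ell(s,t)$ identifying $L$ with $\C P_1$, and set $\phi(s,t):=F(\ell(s,t))$. Since $L\not\subset C$, $\phi$ is a nonzero homogeneous polynomial of degree $d$ in $(s,t)$, hence factors into $d$ linear factors over $\C$ and so has exactly $d$ zeros in $\C P_1$ counted with multiplicity. It then remains to check that for each $p=\ell(s_0,t_0)\in C\cap L$ the multiplicity of $(s_0,t_0)$ as a root of $\phi$ coincides with the intersection multiplicity $I_p(C,L)$, which is precisely the order of vanishing of $F|_L$ at $p$ (well defined because $C$, being smooth, has a discrete valuation ring as local ring at $p$). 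Granting this, $\deg(C\cdot L)=\sum_p I_p(C,L)=d$, and combining with the previous step we get $\deg(C\cdot X)=ed$ for every curve $X$ of degree $e$ not containing $C$.

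I expect the main obstacle to be the last two local identifications: fixing once and for all which definition of $I_p(C,X)$ is in use, and then verifying both the additivity $I_p(C,X_1X_2)=I_p(C,X_1)+I_p(C,X_2)$ and the equality of $I_p(C,L)$ with the order of the one-variable form $\phi$ at the corresponding root. With the ``order of vanishing along $C$'' description of the intersection multiplicity — available here because $C$ is smooth — both statements are standard and essentially formal, but they are the points requiring care; by contrast the reduction to lines is purely a matter of linear equivalence together with \theoref{RR}.
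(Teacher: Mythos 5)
The paper does not actually prove this statement: it quotes B\'ezout's theorem and refers to \cite[p.~86]{griffiths1985}, so there is no internal proof to compare against. Your argument is a legitimate self-contained route, and its global skeleton is sound: the observation that $C\cdot X\sim C\cdot X'$ for curves of equal degree, combined with the fact that linearly equivalent divisors have equal degree (which you correctly extract from the two instances of \theoref{RR}), does reduce everything to a single curve of each degree; the specialization to a union of $e$ lines and then to one line, where $F$ restricts to a nonzero binary form of degree $d$ with exactly $d$ roots counted with multiplicity, is clean and complete at the global level. The entire remaining content is concentrated in the two local facts you defer: additivity $I_p(C,X_1X_2)=I_p(C,X_1)+I_p(C,X_2)$, and the identification of $I_p(C,L)$ with the multiplicity of the corresponding root of $\phi=F|_L$. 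The first is immediate once you fix $I_p(C,X):=\mathrm{ord}_p(G|_C)$ in the discrete valuation ring $\mathscr{O}_{C,p}$ ($C$ smooth), since $\mathrm{ord}_p$ is additive on products. The second, however, is genuinely the symmetry of intersection multiplicity: $\mathrm{ord}_{p}(\ell|_C)$ computed on $C$ versus $\mathrm{ord}_p(F|_L)$ computed on $L$, both equal to $\dim_\C\mathscr{O}_{\C P_2,p}/(F,\ell)$. Your parenthetical attributes the well-definedness of $\mathrm{ord}_p(F|_L)$ to the smoothness of $C$, when it is the smoothness of $L$ that makes that order well defined; the smoothness of $C$ is what makes the paper's divisor $C\cdot X$ well defined in the first place. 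This is a slip of attribution rather than a logical failure, but it signals that the one nontrivial lemma your proof rests on (the symmetry, or equivalently the equality of both order descriptions with the local length) is being passed over; stating and proving that single local identity would make the argument complete.
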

It follows from Bézout's theorem \cite[p.~86]{griffiths1985} that the degree of $H$ is equal to $n$.
We also have 
\begin{theorem}[L=AF+BG]\label{AFBG}
Let $C=\{F=0\}$ be a smooth curve and $X=\{G=0\}$ a curve not containing $C$. Then if a curve $Y=\{L=0\}$ contains $C\cdot X$  one can write $L=AF+BG$, with $A$ and $B$ homogeneous polynomials of degrees, respectively, $\deg(L)-\deg(F)$ and $\deg(L)-\deg(G)$.
\end{theorem}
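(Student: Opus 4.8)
The plan is to pull the hypothesis back onto the smooth curve $C=\{F=0\}$ and to reduce the whole assertion to the completeness of the linear series cut out on $C$ by plane curves of a prescribed degree. Write $d=\deg F$, $e=\deg G$, $m=\deg L$, and let $H_{C}$ denote a hyperplane section of $C$, i.e. the divisor cut out by a generic line. We may assume $Y$ does not contain $C$ (otherwise $F\mid L$ and the statement is trivial with $B=0$), so that $C\cdot Y$ is defined. By Bézout's theorem the divisors $C\cdot Y$ and $C\cdot X$ have degrees $dm$ and $de$, and the hypothesis that $Y$ passes through $C\cdot X$ says precisely that $E:=C\cdot Y-C\cdot X$ is effective; in particular $m\ge e$ and $\deg E=d(m-e)$.

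The first thing I would check is that $E$ is linearly equivalent to a divisor cut out on $C$ by some curve of degree $m-e$. Choosing any form $G_{0}$ of degree $m-e$ not divisible by $F$, the curve $\{GG_{0}=0\}$ has degree $m$, so $C\cdot\{GG_{0}=0\}$ and $C\cdot Y$ are linearly equivalent by the observation, made just before Bézout's theorem, that equal-degree plane curves cut out linearly equivalent divisors on $C$. Since $C\cdot\{GG_{0}=0\}=C\cdot X+C\cdot\{G_{0}=0\}$, subtracting $C\cdot X$ gives $E\sim C\cdot\{G_{0}=0\}\sim(m-e)H_{C}$.

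The heart of the argument, and the step I expect to be the main obstacle, is to upgrade this linear equivalence to an honest curve: I want a homogeneous polynomial $B$ of degree $m-e$ with $C\cdot\{B=0\}=E$. Equivalently, I must know that the complete linear series $|(m-e)H_{C}|$ on $C$ is cut out by plane curves of degree $m-e$, so that our effective divisor $E$ in that series is cut out by one of them. This is exactly what Riemann--Roch supplies. For $k\ge0$, the restriction map sending a form of degree $k$ to its class in $H^{0}(C,\mathscr{O}_{C}(k))$ has kernel $F\cdot\{\text{forms of degree }k-d\}$ (since $C$ is smooth, $F$ is irreducible, so a form vanishing on $C$ is a multiple of $F$), hence image of dimension $\binom{k+2}{2}-\binom{k-d+2}{2}$; a dimension count with Riemann--Roch, the genus formula $g=\binom{d-1}{2}$, and the adjunction fact that the canonical class of $C$ is cut by curves of degree $d-3$ shows that $l(kH_{C})$ equals the same number, so the map is surjective. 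Most transparently, this surjectivity is the vanishing $H^{1}\bigl(\mathbb{P}^{2},\mathscr{O}(k-d)\bigr)=0$ fed into the exact sequence $0\to\mathscr{O}_{\mathbb{P}^{2}}(k-d)\xrightarrow{\,F\,}\mathscr{O}_{\mathbb{P}^{2}}(k)\to\mathscr{O}_{C}(k)\to0$. Taking $k=m-e$ produces the wanted $B$.

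With $B$ in hand the rest is routine. One has $C\cdot\{BG=0\}=C\cdot\{B=0\}+C\cdot X=E+C\cdot X=C\cdot Y$, so $L$ and $BG$ are two forms of degree $m$ whose restrictions to $C$ have the same divisor of zeros; hence $(L/BG)|_{C}$ is a rational function on $C$ with neither zeros nor poles, and so it equals a nonzero constant $c$. Then $L-cBG$ vanishes identically on the irreducible curve $C=\{F=0\}$, so $F$ divides $L-cBG$; replacing $B$ by $cB$ we may write $L-BG=AF$, and comparing degrees exhibits $A$ as homogeneous of degree $m-d=\deg L-\deg F$ and $B$ of degree $m-e=\deg L-\deg G$, as required (when $m<d$ the last congruence forces $L=BG$ and $A=0$). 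The only genuinely substantive ingredient is the surjectivity statement of the third paragraph; everything else is bookkeeping with Bézout's theorem and unique factorization.
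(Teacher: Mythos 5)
Your proof is correct. Note that the paper states Theorem~\ref{AFBG} without proof, treating it as a classical result, so there is no in-text argument to measure yours against. Your route --- read the hypothesis as effectivity of $E=C\cdot Y-C\cdot X$, identify its class as $(\deg L-\deg G)H_C$, realize $E$ as the divisor cut by an honest form $B$ via completeness of the linear series of plane sections, and finish with unique factorization --- is sound, and you correctly isolate the one substantive step, the surjectivity of $H^0(\mathbb{P}^2,\mathscr{O}(k))\to H^0(C,\mathscr{O}_C(k))$, which is indeed settled by $H^1(\mathbb{P}^2,\mathscr{O}(k-d))=0$. That surjectivity is essentially the content of the Brill--Noether Restsatz (Theorem~\ref{BN1}), which the paper also leaves unproved; by establishing it cohomologically rather than citing it, you avoid a circularity that would otherwise arise, since the paper's corollary of the Restsatz is itself deduced using Theorem~\ref{AFBG}. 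Two caveats worth recording: your argument uses smoothness of $C$ essentially (irreducibility of $F$, so that vanishing on $C$ forces divisibility by $F$ and so that $F\nmid BG$), whereas Noether's classical $AF+BG$ theorem is a more delicate local statement valid for singular $C$; and the hypothesis ``$Y$ contains $C\cdot X$'' must be read divisorially, $C\cdot Y\geq C\cdot X$ with multiplicities, as you do --- the purely set-theoretic reading would make the statement false.
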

One of the central results in the classical theory of plane algebraic curves is the following theorem of Brill and Noether
\begin{theorem}[Brill-Noether Restsatz]
\label{BN1}
Let $C$ be a non-singular plane curve, and let $X$ be any plane curve not containing $C$. Then for any divisor linearly equivalent to $C\cdot X$, there is a plane curve $X^\prime$ not containing $C$, and such that $C\cdot X^\prime=D$. 
\end{theorem}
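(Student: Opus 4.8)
The plan is to reduce the Brill--Noether Restsatz to the $L=AF+BG$ theorem (\theoref{AFBG}) together with a parameter count that ultimately rests on Riemann--Roch. Write $C=\{F=0\}$ with $\deg F=d$, let $X=\{G=0\}$ with $\deg G=e$, and set $E:=C\cdot X$, a divisor of degree $de$ on $C$ by Bézout. Let $D$ be an arbitrary divisor with $D\sim E$, so $D=E+(h)$ for some rational function $h$ on $C$. By the theorem that every rational function on $C$ is a ratio of two forms of the same degree, write $h=P/Q|_C$ with $P,Q$ homogeneous of some common degree $m$. Then $(h)=C\cdot\{P=0\}-C\cdot\{Q=0\}$, and I would like to manufacture a single curve $X'$ of degree $e$ cutting out exactly $D$.

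First I would handle the effective case, i.e. assume $D\geq 0$. The curve $Y=\{PG=0\}$ has degree $e+m$ and its divisor on $C$ is $C\cdot Y = E + C\cdot\{P=0\}\geq C\cdot\{Q=0\}+D$ by construction (since $E+(h)=D$ means $E+C\cdot\{P=0\}=D+C\cdot\{Q=0\}$). Hence $Y$ contains the divisor $C\cdot\{Q=0\}$, so by \theoref{AFBG} applied with the curve $\{Q=0\}$ in place of $X$ we can write $PG=AF+BQ$ with $B$ homogeneous of degree $(e+m)-m=e$. Restricting to $C$ gives $PG|_C=BQ|_C$, hence $C\cdot\{B=0\}+C\cdot\{Q=0\}=E+C\cdot\{P=0\}=D+C\cdot\{Q=0\}$, so $C\cdot\{B=0\}=D$. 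Thus $X'=\{B=0\}$ is the desired plane curve of degree $e$, provided $B$ does not vanish identically on $C$ — but $B\equiv 0$ on $C$ would force $PG\equiv AF$, i.e. $\{P=0\}\cup\{G=0\}\supseteq C$, impossible since $X=\{G=0\}$ does not contain $C$ and $P$ has degree $m<e+m$ while being only a factor.

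It remains to treat a general divisor $D\sim E$ that is not a priori effective; here I would first replace $D$ by a linearly equivalent effective divisor of the same degree. Since $\deg D=de$ and one expects — this is where Riemann--Roch enters — that $l(D)\geq 1$ when $\deg D$ is large enough relative to the genus $g=\binom{d-1}{2}$ of $C$, there exists $f\in L(D)$ nonzero, and $D':=D+(f)\geq 0$ is effective, linearly equivalent to $D$ hence to $E$, and the argument above produces $X'$ with $C\cdot X'=D'$. But $D'$ need not equal $D$ on the nose, so to land exactly on $D$ one argues that two curves of degree $e$ cutting linearly equivalent divisors on $C$ suffice, and then re-runs the effective argument taking care to carry the extra principal divisor $(f)$ along in the bookkeeping — concretely, absorbing $f=S/T|_C$ into the forms $P,Q$ before forming $Y=\{PG=0\}$. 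I expect the main obstacle to be exactly this last bookkeeping step: ensuring that the homogeneous form $B$ extracted from \theoref{AFBG} cuts out $D$ itself and not merely something linearly equivalent, which requires choosing the auxiliary forms $P,Q$ (and the degree $m$) compatibly so that all the ``excess'' divisors cancel cleanly on $C$. The positivity input $l(D)\geq 1$, needed to reduce to the effective case, is a direct consequence of \theoref{RR} once one checks $\deg D = de > 2g-2$, or more carefully uses $l(D)=\deg D+1-g+l(K-D)\geq \deg D+1-g>0$.
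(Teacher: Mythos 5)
The paper itself offers no proof of \theoref{BN1}: the Restsatz is quoted as a classical result (see \cite{eisenbud1996}, \cite{griffiths1985}), so your argument cannot be checked against an ``official'' one and must stand on its own. Its core --- the effective case --- does stand, and is in fact the standard classical argument: writing $D=E+(h)\geq 0$ with $h=P/Q|_C$, the auxiliary curve $Y=\{PG=0\}$ cuts $C$ in $D+C\cdot\{Q=0\}\geq C\cdot\{Q=0\}$, so \theoref{AFBG} yields $PG=AF+BQ$ with $\deg B=e$, and restriction to $C$ gives $C\cdot\{B=0\}=D$. Two small repairs are needed. First, the reason $B\not\equiv 0$ on $C$ is not a degree count but irreducibility of $F$: if $F\mid B$ then $F\mid PG$, hence $F\mid P$ or $F\mid G$, both excluded because $G$ does not vanish on $C$ by hypothesis and $P$ does not vanish on $C$ since $h=P/Q|_C$ is a well-defined nonzero rational function. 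Second, for the same reason you should record that $\{Q=0\}$ does not contain $C$ before invoking \theoref{AFBG} with $Q$ in the role of $G$.

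The last paragraph, however, chases a phantom. A divisor of the form $C\cdot X'$ is effective by definition, so the conclusion is simply false for a non-effective $D\sim C\cdot X$, and no bookkeeping with an auxiliary $f\in L(D)$ can land exactly on a non-effective $D$; at best you reach some effective $D'\sim D$, which is a different statement. The theorem as printed tacitly assumes $D$ effective --- as does every classical formulation of the Restsatz, and as does the only use the paper makes of it, namely the corollary computing $l(mH)-l(mH-\Lambda)$, where only effective divisors in $|mH|$ are relevant. The correct move is to point out that missing hypothesis and delete the reduction, not to attempt it. With that deletion Riemann--Roch disappears from your proof entirely: in this form the Restsatz needs only B\'ezout and \theoref{AFBG}.
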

An important important consequence of the Brill-Noether theorem \cite[cor.~6]{eisenbud1996} is 
\begin{corollary}
Let $C$ be a smooth plane curve of degree $d$, and let $\Lambda$ be a subset of $\lambda$ distinct points of $C$ considered as an effective divisor on $C$. Then 
the dimension of the space of homogeneous polynomials of degree $m$ in three variables modulo those vanishing on $\Lambda$ is equal to
\begin{equation}
\label{BN}
l(mH)-l(mH-\Lambda).
\end{equation}
\end{corollary}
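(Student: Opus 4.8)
The plan is to realise the vector space in the statement as the image of a single explicit linear map and to compute its kernel and image by means of the Brill-Noether Restsatz (\theoref{BN1}). Write $S_m$ for the space of homogeneous polynomials of degree $m$ in $x,y,z$, let $F$ be the defining form of $C$, and fix a linear form $\ell$ whose zero line $L=\{\ell=0\}$ does not contain $C$, so that $H=C\cdot L$ and, more generally, $mH=C\cdot\{\ell^m=0\}$. Define $\phi\colon S_m\to\mathscr K(C)$ by $\phi(Q)=(Q/\ell^m)\big|_C$. When $F\nmid Q$ the curve $\{Q=0\}$ does not contain $C$ and the divisor of $\phi(Q)$ is $C\cdot\{Q=0\}-mH\geq-mH$, so $\phi(Q)\in L(mH)$; when $F\mid Q$ clearly $\phi(Q)=0$. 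Conversely, $\phi(Q)=0$ forces $Q$ to vanish on $C$, and since $C$ is smooth, hence irreducible, the Nullstellensatz gives $F\mid Q$; thus $\ker\phi=F\cdot S_{m-d}$ (which is $\{0\}$ when $m<d$). Observe that $\ker\phi$ lies in the subspace $S_m(\Lambda):=\{Q\in S_m:Q|_\Lambda=0\}$ of forms vanishing on $\Lambda$, as such forms vanish on all of $C\supseteq\Lambda$; and the number to be computed is exactly $\dim\big(S_m/S_m(\Lambda)\big)$.

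The heart of the argument is to show that $\phi$ maps $S_m$ onto $L(mH)$ and $S_m(\Lambda)$ onto $L(mH-\Lambda)$. Given $f\in L(mH)$, set $D:=(f)+mH$; then $D\geq0$ and $D\sim mH=C\cdot\{\ell^m=0\}$, so by the Brill-Noether Restsatz (\theoref{BN1}), applied to $C$ and the curve $\{\ell^m=0\}$, there is a plane curve $\{Q=0\}$ not containing $C$ with $C\cdot\{Q=0\}=D$; by Bézout's theorem $\deg Q=m$, so $Q\in S_m$. Then the divisor of $\phi(Q)$ equals $D-mH=(f)$, and since two rational functions with the same divisor on the complete curve $\tilde C$ differ by a nonzero constant, $f\in\IM\phi$. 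Running the same argument for $g\in L(mH-\Lambda)$ produces $Q\in S_m$ with divisor of $\phi(Q)$ equal to $(g)$ and $C\cdot\{Q=0\}=(g)+mH\geq\Lambda$; the latter forces $Q$ to vanish at each point of $\Lambda$, i.e. $Q\in S_m(\Lambda)$, and $g$ is a scalar multiple of $\phi(Q)$, so $L(mH-\Lambda)\subseteq\phi\big(S_m(\Lambda)\big)$. The opposite inclusion is immediate: if $Q\in S_m(\Lambda)$ with $F\nmid Q$ then $C\cdot\{Q=0\}\geq\Lambda$, whence the divisor of $\phi(Q)$ is $\geq\Lambda-mH$, i.e. $\phi(Q)\in L(mH-\Lambda)$ (and $\phi(Q)=0$ when $F\mid Q$).

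It follows that $\phi$ induces isomorphisms $S_m/\ker\phi\xrightarrow{\ \sim\ }L(mH)$ and $S_m(\Lambda)/\ker\phi\xrightarrow{\ \sim\ }L(mH-\Lambda)$, so that $\dim\big(S_m/\ker\phi\big)=l(mH)$ and $\dim\big(S_m(\Lambda)/\ker\phi\big)=l(mH-\Lambda)$. Since $\ker\phi\subseteq S_m(\Lambda)\subseteq S_m$, the dimension to be computed is
$$\dim\big(S_m/S_m(\Lambda)\big)=\dim\big(S_m/\ker\phi\big)-\dim\big(S_m(\Lambda)/\ker\phi\big)=l(mH)-l(mH-\Lambda),$$
which is the equality \eqref{BN}.

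The main obstacle is the surjectivity step of the second paragraph: showing that every element of $L(mH)$, and more delicately every element of $L(mH-\Lambda)$, is actually cut out on $C$ by a form of degree exactly $m$ with the prescribed vanishing on $\Lambda$. This is precisely what the Brill-Noether Restsatz supplies, once it is paired with Bézout's theorem to pin down the degree of the realising curve and with the elementary fact that functions with equal divisors on a complete curve are proportional. The identification $\ker\phi=F\cdot S_{m-d}$ through the Nullstellensatz and all the remaining inclusions are routine divisor bookkeeping.
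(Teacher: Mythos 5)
Your proof is correct, and it relies on the same key input as the paper---the Brill--Noether Restsatz (\theoref{BN1}) combined with B\'ezout to realise every effective divisor in $|mH|$ (resp.\ in $|mH|$ containing $\Lambda$) as $C\cdot\{Q=0\}$ for a form $Q$ of degree exactly $m$ (resp.\ one vanishing on $\Lambda$)---but your execution differs from the paper's in two useful ways. First, where the paper works projectively, establishing set bijections $P(V)\simeq|mH|\simeq P(L(mH))$ and proving injectivity of $X\mapsto C\cdot X$ by invoking the $L=AF+BG$ theorem (\theoref{AFBG}) to show that two forms cutting the same divisor are proportional, you linearise the whole argument: the single map $Q\mapsto(Q/\ell^m)|_C$ has kernel $F\cdot S_{m-d}$ by the Nullstellensatz (using that a smooth plane curve is irreducible), and rank--nullity replaces the injectivity discussion entirely. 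This also repairs a small looseness in the paper, since a bare bijection of projective spaces does not by itself determine linear dimensions; your version exhibits the underlying linear isomorphism explicitly. Second, for the term $l(mH-\Lambda)$ the paper switches languages and identifies $L(mH-\Lambda)$ with $\Gamma(\tilde C,[mH-\Lambda])$ via the line bundle $\mathscr{O}(m)|_{\tilde C}\otimes\mathscr{O}_{\tilde C}([-\Lambda])$, whereas you simply rerun the same Restsatz argument on the subspace of forms vanishing on $\Lambda$, keeping the proof uniform and free of sheaf-theoretic machinery. The only points worth flagging are implicit hypotheses you use correctly but should state: the Restsatz is applied to \emph{effective} divisors $D=(f)+mH$, and the passage from $C\cdot\{Q=0\}\geq\Lambda$ to ``$Q$ vanishes at each point of $\Lambda$'' uses that $\Lambda$ consists of distinct reduced points of $C$.
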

\begin{proof}
We apply the Restsatz to the curve C and to the $m$-th power of a generic line. The vector space of homogeneous polynomials of degree $m$ cuts out on $C$ the family of divisors linearly equivalent to $mH$. This family denoted $|mH|$ is isomorphic to the projective space $P(L(mH))$. This set has the same dimension as the projective space associated to the vector space of homogeneous polynomials of degree m not containing $C$ or, in other words, the projective space of the vector space of homogeneous polynomials of degree $m$ modulo those homogeneous polynomials vanishing on $C$. 

To see this, take a divisor $D$ linearly equivalent to $mH$. We have $D=C\cdot X$ for some $X$ of degree $m$ not containing $C$. Therefore the map
\begin{equation*}
\begin{split}
P(V)&\to |mH|\\
X&\mapsto C\cdot X=D
\end{split}
\end{equation*}
is surjective. 
Suppose that $D=C\cdot X^\prime$ for another curve $X^\prime$. We show that the defining polynomials of $X$ and $X^\prime$ are proportional. The fact that $X$ and $X^\prime$ do not contain $C$ is equivalent to the fact that their defining polynomials $F$ and $F_1$ (resp) are not divisible by the polynomial $P$ which defines $C$. From Theorem \ref{AFBG} we can write $F=A_1F_1+B_1P$ and $F_1=A_2F+B_2P$ with $A_1$ and $A_2$ complex numbers. Moreover $F(1-A_1A_2)=(A_1B_2+B_1)P$ and $F_1(1-A_1A_2)=(A_2B_1+B_2)P$. So we have a contradiction unless $B_1=B_2=0$ and $F_1=\lambda_0 F$. Hence the map
\begin{equation*}
\begin{split}
P(V)&\to |mH|\\
X&\mapsto C\cdot X=D
\end{split}
\end{equation*}
is injective. 

We have thus shown that $P(V)\simeq|mH|\simeq P(L(mH))$. This gives $\dim V=l(mH)=\dim(L(mH))$.
Likewise one shows that $l(mH-\Lambda)=\dim L(mH-\Lambda)$ is the dimension of $V^\prime$, the vector space of homogeneous polynomials of degree m passing through the points of $\Lambda$ modulo those vanishing on $C$. This follows from the fact that $L(mH-\lambda)\simeq\Gamma(\tilde{C},\left[mH-\Lambda\right])$, where $\left[mH-\Lambda\right]$ is the line bundle, \cite[chap.~1]{griffiths1978}, associated to the divisor $mH-\Lambda$. This latter line bundle is $\mathscr{O}(m)|_{\widetilde{C}}\otimes\mathscr{O}_{\widetilde{C}}(\left[-\Lambda\right])$ and this concludes the proof.
\end{proof}
\begin{lemma}
Given a divisor $D$ on a nonsingular projective algebraic curve $C$, the set 
$$|D|:=\{D^\prime\sim D,D^\prime\geqslant0\}\simeq P(L(D)).$$
\end{lemma}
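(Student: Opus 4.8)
The plan is to produce an explicit bijection between $P(L(D))$, the set of lines through the origin of the vector space $L(D)$, and the linear system $|D|$, using the classical recipe $f\mapsto (f)+D$, and then to note that this bijection is compatible with the projective-space structure on either side. Write $\Phi$ for the assignment that sends a nonzero $f\in L(D)$ to the divisor $(f)+D$. First I would check that $\Phi$ takes values in $|D|$: by the defining inequality \eqref{R1} of $L(D)$ we have $(f)+D\geqslant 0$, and since $(f)$ is a principal divisor, $(f)+D-D=(f)$ shows $(f)+D\sim D$; hence $(f)+D\in|D|$. Next I would verify that $\Phi$ is constant along lines through the origin, so that it descends to $P(L(D))$: if $g=\lambda f$ with $\lambda\in\C^\times$ then $(g)=(\lambda)+(f)=(f)$, because a nonzero constant has neither zeros nor poles on $C$, and therefore $(g)+D=(f)+D$. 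This yields a well-defined map $\overline{\Phi}\colon P(L(D))\to|D|$.

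Surjectivity of $\overline{\Phi}$ is essentially the definition of linear equivalence: given $D'\in|D|$ we have $D'\geqslant 0$ and $D'-D=(f)$ for some $f\in\mathscr{K}(C)$, so $(f)+D=D'\geqslant 0$ forces $f\in L(D)\setminus\{0\}$ and $\overline{\Phi}([f])=D'$. For injectivity, suppose $\overline{\Phi}([f])=\overline{\Phi}([g])$, i.e.\ $(f)+D=(g)+D$; subtracting $D$ gives $(f)=(g)$, so the rational function $h:=f/g$ satisfies $(h)=0$.

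The one step that is more than formal bookkeeping --- and the point I expect to be the real obstacle --- is to deduce that a rational function $h$ with $(h)=0$ is a nonzero constant. Here I would invoke that $C$, being a smooth projective plane curve, has an associated compact connected Riemann surface $\tilde{C}$: a rational function with empty divisor is holomorphic and pole-free on $\tilde{C}$, hence constant by the maximum modulus principle (equivalently, a nonconstant rational function induces a nonconstant, therefore surjective, holomorphic map $\tilde{C}\to\mathbb{P}^1$, which would attain the value $\infty$, contradicting $(h)=0$). Thus $f=\lambda g$ with $\lambda\in\C^\times$, so $[f]=[g]$, and $\overline{\Phi}$ is a bijection. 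Since $\overline{\Phi}$ arises from the linear construction $f\mapsto(f)+D$, it carries the projectivization of each linear subspace of $L(D)$ onto a linear sub-family of $|D|$ of the same dimension; endowing $|D|$ with this structure makes $\overline{\Phi}$ an isomorphism of projective spaces, which is the identification $|D|\simeq P(L(D))$ asserted in the lemma.
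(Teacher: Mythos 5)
Your proposal is correct and follows essentially the same route as the paper: both use the map $f\mapsto(f)+D$ and reduce the injectivity to the fact that a rational function with empty divisor on the compact Riemann surface $\tilde{C}$ is a nonzero constant (which the paper phrases as $\mathscr{O}_C(C)=\C$ and you justify via the maximum modulus principle). Your write-up is somewhat more explicit about well-definedness on lines and surjectivity, but there is no substantive difference in the argument.
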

\begin{proof}
For every $D^\prime\in|D|$, there exists $f\in\mathscr{K}(C)$ such that $D^\prime=(f)+D$. And any two such $f\in\mathscr{K}(C)$ differ by a non-zero constant. Indeed  if $(f)+D=(g)+D$ then $(f)=(g)$ so $(f/g)=0$. Let us take two representatives of $f$ and $g$, still denoted $f$ and $g$ ($f\neq0$ and $g\neq0$). One sees then that $f/g$ has no zeros or poles (so it is in particular holomorphic on $\tilde{C}$) therefore it is an element of $\C^\times$ because $\mathscr{O}_C(C)=\C$. 
Therefore we have a bijective map
\begin{equation*}
\begin{split}
P(L(D))&\to|D|\\
f&\mapsto (f)+D
\end{split}
\end{equation*}
\end{proof}
We will need the following version of Cayley-Bacharach's theorem \cite[th.~CB4]{eisenbud1996}
\begin{theorem}[C-B1]
\label{CB1}
Let $X_1$, $X_2$ be plane curves of degrees $m$ and $n$ respectively, with $X_1$ smooth and meeting in a collection of $mn$ distinct points $\Gamma=\{p_1,\ldots,p_{mn}\}$. If $X\subset \C P_2$ is any plane curve of degree $m+n-3$ containing all but one point of $\Gamma$, then $X$ contains all of $\Gamma$.
\end{theorem}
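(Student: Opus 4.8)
The plan is to push the whole question onto the smooth curve $C:=X_1$ and to recognise the ``residual'' linear systems that appear there as the canonical system and a one‑point twist of it, so that Riemann--Roch supplies the needed dimension count for free. First I would dispose of the trivial case: if $X\supseteq C$ then $X$ contains every point of $\Gamma$ and there is nothing to prove, so I may assume that $X$ does not contain the irreducible curve $C$; then $C\cdot X$ is a well‑defined effective divisor of degree $m(m+n-3)$. Write $p$ for the one point of $\Gamma$ that $X$ is not assumed to pass through, and set $\Gamma':=\Gamma-p$, a divisor of degree $mn-1$. With this notation the hypothesis is precisely $C\cdot X\ge \Gamma'$, and what must be shown is $C\cdot X\ge\Gamma$, i.e.\ that $X$ also passes through $p$.

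Next I would set up the divisor‑class bookkeeping on $C$. Let $H$ denote the line‑section class on $C$, of degree $m$ by B\'ezout, let $g=\binom{m-1}{2}$ be the genus, and let $K$ be the canonical class. By the adjunction formula for a smooth plane curve one has $K\sim(m-3)H$ (equivalently, plane curves of degree $m-3$ cut out the canonical series on $C$; I would cite this from \cite{griffiths1985} or derive it from the explicit description of holomorphic differentials on a smooth plane curve). Since $X_2$ has degree $n$, B\'ezout together with the remark (recorded just before Theorem~\ref{AFBG}) that two divisors cut on $C$ by curves of the same degree are linearly equivalent gives $\Gamma=C\cdot X_2\sim nH$, of degree $mn$. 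The curve $X$ has degree $e:=m+n-3$, so $C\cdot X$ is linearly equivalent to $eH$. The decisive computation is then
$$eH-\Gamma\ \sim\ (m+n-3)H-nH\ =\ (m-3)H\ \sim\ K,\qquad\text{hence}\qquad eH-\Gamma'=eH-\Gamma+p\ \sim\ K+p.$$

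Now I would invoke the Corollary to the Brill--Noether theorem proved above, more precisely the two identifications established in its proof: the space of homogeneous polynomials of degree $e$ vanishing on $\Gamma'$, taken modulo those divisible by the equation of $C$, is isomorphic to $L(eH-\Gamma')\cong L(K+p)$, and likewise the space of degree‑$e$ polynomials vanishing on all of $\Gamma$, modulo those vanishing on $C$, is isomorphic to $L(eH-\Gamma)\cong L(K)$. By Riemann--Roch (Theorem~\ref{RR}) one has $l(K)=g$ and $l(K+p)=\deg(K+p)+1-g+l(-p)=(2g-1)+1-g+0=g$, using $l(-p)=0$ because $-p$ has negative degree. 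Therefore $l(eH-\Gamma')=l(eH-\Gamma)$. Since every polynomial vanishing on $\Gamma$ vanishes on $\Gamma'$, the inclusion $L(eH-\Gamma)\subseteq L(eH-\Gamma')$ is an equality of finite‑dimensional spaces; unwinding the identification, any degree‑$e$ curve $X$ with $C\cdot X\ge\Gamma'$ in fact satisfies $C\cdot X\ge\Gamma$, i.e.\ $X$ contains all of $\Gamma$.

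Essentially all the work is in the bookkeeping; the one genuinely substantive ingredient is the adjunction identity $K\sim(m-3)H$, which is what makes the residual system $|eH-\Gamma|$ coincide with $|K|$, and after that Riemann--Roch converts ``one fewer incidence condition'' into ``no fewer conditions'' automatically. The point demanding the most care is the translation between the geometric statement ``$X$ passes through the points of $\Gamma'$'' and membership in $L(eH-\Gamma')$: one has to work modulo the polynomials divisible by the defining equation of $C$ (which vanish on all of $\Gamma$ in any case), exactly as in the proof of the Corollary, and it is the smoothness, hence irreducibility, of $C$ that guarantees $C\cdot X$ is a legitimate effective divisor once the trivial case $C\subseteq X$ has been excluded. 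A small sanity check worth recording is that the argument is uniform in $m$: for $m=1,2$ one has $g=0$ and both $l(K)$ and $l(K+p)$ are $0$, so no separate treatment of low‑degree $X_1$ is needed.
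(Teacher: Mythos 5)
Your argument is correct, but note that the paper itself does not prove \theoref{CB1} at all: it is imported verbatim from \cite[th.~CB4]{eisenbud1996}, so there is no in-paper proof to match. What you have written is essentially the standard Riemann--Roch proof of Cayley--Bacharach (and, in fact, the one underlying the cited source), assembled from ingredients the paper already sets up: the identification of degree-$e$ forms through a point set $\Lambda$, modulo forms vanishing on $C$, with $L(eH-\Lambda)$ (the Corollary to the Restsatz, \theoref{BN1}), B\'ezout, and \theoref{RR}. Your dimension counts check out: $\deg(eH-\Gamma)=m(m+n-3)-mn=m(m-3)=2g-2$, $l(K)=l(K+p)=g$, and the inclusion $L(eH-\Gamma)\subseteq L(eH-\Gamma')$ between spaces of equal finite dimension forces equality, which unwinds to the claim once the trivial case $C\subseteq X$ is set aside. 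The one substantive input not stated anywhere in the paper is the adjunction identity $K\sim(m-3)H$ for a smooth plane curve of degree $m$; you correctly flag it and it does need an external citation (e.g.\ \cite{griffiths1985}). It is worth observing that your route differs in spirit from how the paper handles its own generalization \theoref{CB2}, which is proved by differentiating a Serret syzygy rather than by residual linear systems; your approach has the advantage of being uniform in $m$ (including $g=0$) and of making the role of the canonical series explicit, at the cost of relying on the Restsatz identification rather than only on Serret's elementary criterion.
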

\begin{corollary}[Chasles' theorem]
Let $X_1$, $X_2\subset \C P_2$ be cubic plane curves, with $X_1$ smooth, meeting in nine points $P_1,P_2,\ldots,P_9$. If $X\subset \C P_2$ is any cubic plane curve containing $8$ among them, then $X$ contains the remaining point as well. 
\end{corollary}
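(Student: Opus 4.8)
The plan is to obtain the statement as the special case $m=n=3$ of \theoref{CB1}. First I would observe that, by B\'ezout's theorem, two plane cubics in $\C P_2$ meet in $mn = 3\cdot 3 = 9$ points counted with multiplicity, so the hypothesis that $X_1$ and $X_2$ meet in nine points $P_1,\dots,P_9$ is precisely the statement that these nine intersection points are distinct. Setting $\Gamma=\{P_1,\dots,P_9\}$, the pair $(X_1,X_2)$ then satisfies the hypotheses of \theoref{CB1} verbatim: $X_1$ has degree $m=3$ and is smooth, $X_2$ has degree $n=3$, and $X_1$ and $X_2$ meet in the collection $\Gamma$ of $mn=9$ distinct points.

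Next I would compute the relevant degree: $m+n-3 = 3+3-3 = 3$. Hence \theoref{CB1} asserts that any plane curve of degree $3$ --- that is, any cubic --- containing all but one of the nine points of $\Gamma$ must contain all of $\Gamma$. Applying this with $X$ the given cubic, which by hypothesis passes through eight of the $P_i$, we conclude that $X$ passes through the ninth point as well, which is the assertion. I expect no genuine obstacle: the entire content is the numerology $mn = 9$, $m+n-3 = 3$ together with the reading of ``nine points'' as ``nine distinct points''. One could alternatively argue directly through the linear system of plane cubics --- these form a $9$-dimensional projective space, eight of the points of $\Gamma$ impose conditions cutting it down to a pencil, which must then be the pencil spanned by $X_1$ and $X_2$, every member of which contains $\Gamma$ --- but making this rigorous requires exactly the independence-of-conditions information encoded in \theoref{CB1}, so passing through \theoref{CB1} is the most economical route.
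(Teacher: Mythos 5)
Your proof is correct and follows exactly the route the paper intends: the corollary is stated as an immediate specialization of \theoref{CB1} with $m=n=3$, so that $mn=9$ and $m+n-3=3$, which is precisely your argument. The paper gives no further detail, so your write-up simply makes explicit what the paper leaves implicit.
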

Let us introduce the following theorem of Serret \cite[p.~99]{semple1949}
\begin{theorem}[Serret]
\label{Serret}
Let $p=\left[a,b,c\right]$ be a point of $\C P_2$ and associate to it the linear form $l_p(x,y,z)=ax+by+cz$. Then the necessary and sufficient condition that a curve $C^r$, of degree $r$, which passes through $q-1$ of a set of $q$ given points of $\C P_2$, passes through the remaining one, is that there is a linear relation or syzygy (with all coefficients non-zero) connecting the $r^{th}$ powers of the linear forms (or, by abuse of language, tangential equation) associated to each given point.
\end{theorem}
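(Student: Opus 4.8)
The plan is to recast the whole statement as linear algebra on the space $W_r$ of homogeneous polynomials of degree $r$ in $x,y,z$, which has dimension $N:=\binom{r+2}{2}$. To each point $p=[a:b:c]\in\C P_2$ attach the evaluation functional $\phi_p\in W_r^{*}$ defined by $\phi_p(F)=F(a,b,c)$; a curve $\{F=0\}$ of degree $r$ passes through $p$ exactly when $\phi_p(F)=0$. The one step that requires an idea rather than bookkeeping is the apolarity identification: put on $W_r$ the nondegenerate symmetric bilinear form for which distinct monomials are orthogonal and $\langle x^iy^jz^k,\,x^iy^jz^k\rangle=i!\,j!\,k!$. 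A direct computation on monomials, using the multinomial theorem, gives $\langle l_p^{\,r},F\rangle=r!\,F(a,b,c)$ for every $F\in W_r$, so under the induced isomorphism $W_r^{*}\cong W_r$ the functional $\phi_p$ corresponds to a nonzero scalar multiple of $l_p^{\,r}$. Consequently a syzygy $\sum_i\lambda_i l_{p_i}^{\,r}=0$ in $W_r$ carries the same information as a linear relation $\sum_i\lambda_i\phi_{p_i}=0$ in $W_r^{*}$, with the same coefficients up to a common nonzero scalar; in particular the two relations have the same pattern of vanishing and nonvanishing coefficients.

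Granting this, the proof becomes the standard duality between imposing conditions and syzygies. Fix an index $j$. The assertion that every curve of degree $r$ through $p_1,\dots,\widehat{p_j},\dots,p_q$ also passes through $p_j$ says precisely that $\bigcap_{i\neq j}\ker\phi_{p_i}\subseteq\ker\phi_{p_j}$, and by elementary linear algebra this holds if and only if $\phi_{p_j}$ lies in the linear span of $\{\phi_{p_i}:i\neq j\}$, i.e. if and only if there is a linear relation among $\phi_{p_1},\dots,\phi_{p_q}$ in which the coefficient of $\phi_{p_j}$ is nonzero. Hence Serret's hypothesis --- that this holds for every choice of the omitted point $p_j$ --- is equivalent to the statement that for each $j$ there is a syzygy among $l_{p_1}^{\,r},\dots,l_{p_q}^{\,r}$ in which the coefficient of $l_{p_j}^{\,r}$ is nonzero.

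Both implications are now immediate. If $\sum_i\lambda_i l_{p_i}^{\,r}=0$ with all $\lambda_i\neq0$, then for each $j$ one solves the corresponding relation for $\phi_{p_j}$ in terms of the remaining $\phi_{p_i}$, so any curve of degree $r$ vanishing at the other $q-1$ points automatically vanishes at $p_j$. Conversely, let $\mathcal R\subseteq\C^q$ be the vector space of all syzygy coefficient vectors $(\lambda_1,\dots,\lambda_q)$. The reformulation above says that for every $j$ the subspace $\{\lambda\in\mathcal R:\lambda_j=0\}$ is proper in $\mathcal R$, and in particular $\mathcal R\neq\{0\}$. Since the ground field is infinite, the nonzero space $\mathcal R$ is not the union of the finitely many proper subspaces $\{\lambda_j=0\}$, $1\le j\le q$; any vector of $\mathcal R$ lying outside all of them is a syzygy with every coefficient nonzero.

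The genuine obstacle is the apolarity lemma of the first paragraph --- equivalently, the classical fact that, once a point is identified with the linear form (its ``tangential equation'') that it determines, the $r$-th power $l_p^{\,r}$ encodes exactly the linear condition ``pass through $p$'' imposed on curves of order $r$; everything else is the conditions-versus-syzygies duality together with the observation that a vector space over an infinite field is not a finite union of proper subspaces. One should also take care, in reading Serret's statement, that ``the remaining one'' is meant to range over all $q$ given points --- it is this symmetric form of the hypothesis that matches the symmetric conclusion that the syzygy has all coefficients nonzero.
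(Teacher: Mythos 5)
Your proposal is correct, and its engine is the same one the paper uses: your apolarity pairing $\langle l_p^{\,r},F\rangle=r!\,F(a,b,c)$ is precisely the identity $h\left(\frac{\partial}{\partial x},\frac{\partial}{\partial y},\frac{\partial}{\partial z}\right)(l_{p})^{r}=r!\,h(p)$ on which the paper's argument rests, and your "sufficiency" direction (solve the syzygy for $\phi_{p_j}$, conclude the curve vanishes at $p_j$) is a reformulation of the paper's one displayed step. Where you genuinely go further is the converse: the paper proves only that a syzygy with all coefficients nonzero forces the incidence property, and defers necessity to Semple--Roth. You supply it in full, first by the standard duality that $\bigcap_{i\neq j}\ker\phi_{p_i}\subseteq\ker\phi_{p_j}$ iff $\phi_{p_j}$ lies in the span of the others, and then by upgrading the resulting family of syzygies (one for each omitted point, each with a specific nonzero coordinate) to a single syzygy with \emph{all} coordinates nonzero, via the observation that a nonzero vector space over an infinite field is not a finite union of proper subspaces. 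That last step is exactly the point that a careless reading would miss, since "there is a relation with $\lambda_j\neq0$ for each $j$" does not formally give one relation working for all $j$ simultaneously; your avoidance of this trap is the real added value. Your explicit remark that "the remaining one" must range over all $q$ points to match the symmetric conclusion is also a fair and useful clarification of the statement as the paper words it.
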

\begin{proof}
If one has a relation of the form
$$\sum_{i=1}^q\lambda_i(l_{p_i})^r=0$$
and the equation of the curve $C^r$ is given by $h$, then $h\left(\dfrac{\partial}{\partial x},\dfrac{\partial}{\partial y},\dfrac{\partial}{\partial z}\right)(l_{p_i})^r=r!h(p_i)$. So if $q-1$ of the points lie on $C^r$ the remaining one also does. The converse is shown along similar lines, see \cite[p.~99]{semple1949}
\end{proof}
\begin{corollary}
\label{Serret1}
When $q=\dbinom{m+2}{m}$ Serret's theorem gives the necessary and sufficient condition that $q$ points should lie on a curve of degree $m$.
\end{corollary}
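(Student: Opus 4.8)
The plan is to combine \theoref{Serret}, applied with $r=m$, with the single numerical fact that makes this value of $q$ special: $\binom{m+2}{m}=\binom{m+2}{2}=\tfrac{(m+1)(m+2)}{2}$ is exactly the dimension of the vector space $S_m$ of homogeneous polynomials of degree $m$ in $x,y,z$. Thus $q-1$ equals the projective dimension of the linear system of all plane curves of degree $m$, so it is precisely the number of (generically independent) linear conditions needed to specify such a curve up to a scalar, and $q-1$ points in general position lie on a unique degree-$m$ curve. The proof will then consist in translating ``the $q$ points lie on a curve of degree $m$'' into linear dependence of the forms $(l_{p_i})^m$, and matching this with the syzygy condition of Serret's theorem.

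For the translation I would reuse the apolarity identity already exploited in the proof of \theoref{Serret}: for $h\in S_m$ and $p=[a,b,c]$ with $l_p=ax+by+cz$ one has $h\!\left(\tfrac{\partial}{\partial x},\tfrac{\partial}{\partial y},\tfrac{\partial}{\partial z}\right)(l_p)^m=m!\,h(p)$. Extending $\langle h,(l_p)^m\rangle:=\tfrac1{m!}\,h(\partial)(l_p)^m=h(p)$ by linearity in the second argument gives a bilinear form $S_m\times S_m\to\C$ that is non-degenerate in characteristic zero, and under the resulting identification $S_m^{*}\cong S_m$ the evaluation functional $\mathrm{ev}_{p_i}\colon h\mapsto h(p_i)$ corresponds to $(l_{p_i})^m$. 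Since $\dim S_m^{*}=q$, the functionals $\mathrm{ev}_{p_1},\dots,\mathrm{ev}_{p_q}$ are linearly dependent if and only if they fail to span $S_m^{*}$, equivalently $\bigcap_i\ker\mathrm{ev}_{p_i}\neq\{0\}$, equivalently some nonzero $h\in S_m$ vanishes at every $p_i$; and, transported through the identification, this says exactly that $(l_{p_1})^m,\dots,(l_{p_q})^m$ admit \emph{some} relation $\sum_i\lambda_i(l_{p_i})^m=0$ with $(\lambda_i)\neq 0$. So the $q$ points lie on a degree-$m$ curve precisely when the $(l_{p_i})^m$ satisfy some syzygy.

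It then remains to match ``some syzygy'' with the conclusion of \theoref{Serret}, which requires a syzygy with \emph{all} coefficients nonzero. When the $q$ points impose exactly $q-1$ independent conditions on degree-$m$ curves (the generic way for them to lie on a single such curve), the space of relations among the $(l_{p_i})^m$ is one-dimensional, and a generator has all coefficients nonzero precisely when every $q-1$ of the $(l_{p_i})^m$ are linearly independent --- equivalently, for each $j$, every degree-$m$ curve through $\{p_i:i\neq j\}$ also passes through $p_j$. This is exactly the hypothesis of Serret's theorem for $r=m$, and together with the count of the first paragraph it forces the (now unique) degree-$m$ curve through any $q-1$ of the points to contain all $q$. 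Stringing the steps together yields: for $q=\binom{m+2}{m}$ points in general position, the existence of an all-nonzero syzygy $\sum_i\lambda_i(l_{p_i})^m=0$ is necessary and sufficient for the $q$ points to lie on a curve of degree $m$.

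The place where real care is needed is this last matching. ``Lying on a degree-$m$ curve'' corresponds to the existence of \emph{some} syzygy, whereas Serret's criterion demands one with \emph{every} coefficient nonzero, and the two genuinely diverge when a proper subset of the $q$ points already imposes dependent conditions (for instance when too many of the points are collinear). So the clean ``necessary and sufficient'' reading of the corollary comes bundled with the uniform-position hypothesis implicit in Serret's formulation, and the careful part of the write-up is either to make that hypothesis explicit, or to verify that for points in general position the relation space is one-dimensional with all coefficients nonzero and that this is the content intended here.
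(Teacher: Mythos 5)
Your argument is correct and reaches the paper's conclusion by a genuinely different mechanism. The paper's own proof is two lines: a Riemann--Roch count (deferred to the proof of Theorem~\ref{CB2}) guarantees a curve of degree $m$ through any $q-1=\binom{m+2}{2}-1$ of the points; the sufficiency half of Serret's theorem --- the identity $h(\partial)(l_p)^m=m!\,h(p)$ applied to that curve --- then shows that a syzygy forces the last point onto it, and the converse is simply quoted as the necessity half of Serret. You instead let the numerical coincidence $q=\dim S_m$ (with $S_m$ the space of degree-$m$ forms in three variables) carry the whole load: under the apolarity pairing the $(l_{p_i})^m$ become the evaluation functionals, and $q$ vectors in the $q$-dimensional space $S_m^{*}$ are dependent iff they have a common nonzero annihilator, i.e.\ iff the points lie on a degree-$m$ curve. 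This yields a self-contained equivalence between ``some syzygy'' and ``lying on a degree-$m$ curve'' with no Riemann--Roch input, at the cost of having to reconcile ``some syzygy'' with the all-coefficients-nonzero syzygy that Serret's theorem actually speaks about. Your closing caveat is well taken: that discrepancy is real for special configurations (for instance when too many of the points are collinear), and the paper's one-line appeal to ``the necessity statement of Serret's theorem'' for the converse silently assumes the same uniform-position hypothesis --- namely that every degree-$m$ curve through $q-1$ of the points contains the last, not merely that one such curve does. Making that hypothesis explicit, as you do, is the more careful write-up; the substance of the corollary is established either way.
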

\begin{proof}
Indeed it follows from the Riemann-Roch theorem (see below in the proof of \theoref{CB2}) that there always exists a curve of degree $m$ which passes through any $q-1=\dfrac{1}{2}m(m+3)$ given points of the fixed smooth curve $C$. So if there is linear relation between the $m^{th}$ powers of the linear terms associated to the q points, then necessarily all the q points lie on that curve of degree $m$. The converse is the necessity statement of Serret's theorem.\end{proof}
We finally prove the following generalized Cayley-Bacharach theorem inspired by \cite{semple1949} and which easily follows from Serret's theorem and The Riemann-Roch theorem.

\begin{theorem}[C-B2]
\label{CB2}
Let $X_1$ and $X_2$ be plane curves of degrees $m$ and $n$ respectively, with $X_1$ smooth and meeting $X_2$ in a collection of $mn$ distinct points $\Gamma=\{p_1,\ldots,p_{mn}\}$. Every curve $C^{m+n-\gamma}$ $(\gamma\bg 3)$ of degree $m+n-\gamma$ which passes through $mn-\dfrac{1}{2}(\gamma-1)(\gamma-2)$ of the points $X_1\cap X_2$ passes through the remainder except when these remaining $\dfrac{1}{2}(\gamma-1)(\gamma-2)$ points lie on a curve $C^{\gamma-3}$ of degree $\gamma-3$.
\end{theorem}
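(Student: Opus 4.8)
The plan is to reduce the statement, via the apolarity identity that drives Serret's theorem (\theoref{Serret}), to an equality of dimensions of spans of powers of linear forms, and then evaluate that equality by Riemann--Roch (\theoref{RR}). Put $r:=m+n-\gamma$ for the degree of the test curve and $s:=\tfrac12(\gamma-1)(\gamma-2)=\binom{\gamma-1}{2}$ for the number of residual points, fix a subset $\Gamma''\subseteq\Gamma$ with $|\Gamma''|=s$, and set $\Gamma':=\Gamma\setminus\Gamma''$; let $H$ be the divisor class on $C:=X_1$ cut by a line and $g$ the genus of $C$, so $\deg H=m$, $\Gamma\sim nH$ and $K\sim(m-3)H$. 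The argument turns on two coincidences: $s=\binom{(\gamma-3)+2}{2}$ is the dimension of the space of ternary forms of degree $\gamma-3$, and $r+(\gamma-3)=m+n-3$ is the complementary degree attached to the complete intersection $\Gamma=X_1\cap X_2$. For the reformulation, writing $l_p=ax+by+cz$ for $p=[a:b:c]$, the identity $h(\partial_x,\partial_y,\partial_z)\,l_p^{\,r}=r!\,h(p)$ from the proof of \theoref{Serret} shows that a degree-$r$ curve $\{h=0\}$ contains $p$ exactly when $h$ annihilates $l_p^{\,r}$ under the (nondegenerate) apolarity pairing on the space of degree-$r$ forms. Thus for any $S\subseteq\Gamma$ the degree-$r$ curves through $S$ are exactly the forms annihilating $W_S:=\operatorname{span}\{l_p^{\,r}:p\in S\}$, and since $W_{\Gamma'}\subseteq W_\Gamma$ the desired conclusion for $\Gamma''$ --- every $C^{m+n-\gamma}$ through $\Gamma'$ also passes through $\Gamma''$ --- is equivalent to $\dim W_\Gamma=\dim W_{\Gamma'}$.

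Next I would compute these dimensions. Passing to annihilators, $\dim W_S=\binom{r+2}{2}-\dim\{\text{degree-}r\text{ forms vanishing on }S\}=l(rH)-l(rH-S)$ for every $S\subseteq\Gamma\subseteq C$, the last equality being the corollary to the Brill--Noether Restsatz (\theoref{BN1}) that identifies $l(kH)-l(kH-\Lambda)$ with the number of conditions $\Lambda$ imposes on curves of degree $k$. Applying this with $S=\Gamma$ and $S=\Gamma'$, and using $K-(rH-\Gamma)\sim(\gamma-3)H$ together with $K-(rH-\Gamma')\sim(\gamma-3)H-\Gamma''$, Riemann--Roch gives
$$l(rH-\Gamma)=m(m-\gamma)+1-g+l\bigl((\gamma-3)H\bigr),\qquad l(rH-\Gamma')=m(m-\gamma)+s+1-g+l\bigl((\gamma-3)H-\Gamma''\bigr);$$
subtracting, the equality $\dim W_\Gamma=\dim W_{\Gamma'}$ reduces to $l\bigl((\gamma-3)H\bigr)-l\bigl((\gamma-3)H-\Gamma''\bigr)=s$.

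Finally I would interpret this last condition: by the corollary to \theoref{BN1}, now read in the reverse direction, $l\bigl((\gamma-3)H\bigr)-l\bigl((\gamma-3)H-\Gamma''\bigr)$ is the number of independent conditions the $s$ points of $\Gamma''$ impose on curves of degree $\gamma-3$, hence at most $\min\bigl(s,\binom{(\gamma-3)+2}{2}\bigr)=s$, with equality if and only if no curve of degree $\gamma-3$ passes through $\Gamma''$ --- this is the $q=\binom{(\gamma-3)+2}{2}$ case of Serret's theorem recorded in \corref{Serret1}. Running the chain of equivalences backward, the conclusion fails for $\Gamma''$ precisely when the $\tfrac12(\gamma-1)(\gamma-2)$ points of $\Gamma''$ lie on a curve of degree $\gamma-3$, which is the assertion. (When $\gamma=3$ the exceptional locus is empty --- a ``curve of degree $0$'' vanishes nowhere --- and one recovers \theoref{CB1}; when $\gamma=4$ the exception is exactly that the three residual points are collinear.)

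The hard part will be not any single identity but the bookkeeping around the two ``opposite'' uses of the Brill--Noether corollary together with the degenerate ranges of the parameters: one should check that this corollary --- and, beneath it, the theorem $L=AF+BG$ (\theoref{AFBG}) --- still applies when $r=m+n-\gamma$ is small or when $C$ has low degree, that the subspace of forms vanishing on all of $C$ cancels correctly in the relevant quotients, and that the equivalences $K\sim(m-3)H$ and $\Gamma\sim nH$ are applied consistently. The hypothesis that $\Gamma$ consists of $mn$ \emph{distinct} points is what keeps every divisor in sight reduced, so that ``a curve passes through a point of $\Gamma$'' coincides with ``the cut-out divisor contains that point.''
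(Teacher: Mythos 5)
Your proof is correct, but it takes a genuinely different route from the paper's. The paper starts from \theoref{CB1} (imported from the literature), converts it via Serret's theorem into a single master syzygy $\sum_{s=1}^{mn}k_s\,l_{p_s}^{\,m+n-3}=0$ supported on all of $\Gamma$, applies the operator $F\left(\partial_x,\partial_y,\partial_z\right)$ of the given curve $C^{m+n-\gamma}$ to descend to a relation $\sum_s k_sF(p_s)\,l_{p_s}^{\,\gamma-3}=0$ supported only on the residual points, and concludes from the linear independence of the $\binom{\gamma-1}{2}$ powers $l_p^{\,\gamma-3}$ when $\Gamma''$ does not lie on a $C^{\gamma-3}$. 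You bypass both \theoref{CB1} and the master syzygy: your argument is a pure dimension count, translating the conclusion into $\dim W_\Gamma=\dim W_{\Gamma'}$ by apolarity and evaluating both sides with the corollary to \theoref{BN1} and \theoref{RR}, the residual class $(\gamma-3)H-\Gamma''$ emerging from $K-(rH-\Gamma')$. I checked the arithmetic ($\deg(rH-\Gamma)=m(m-\gamma)$, $K-(rH-\Gamma)\sim(\gamma-3)H$, and the reduction to $l((\gamma-3)H)-l((\gamma-3)H-\Gamma'')=s$) and it is right. What your route buys: it is self-contained given results actually proved or stated in the paper, it yields the biconditional rather than only the stated implication, and at $\gamma=3$ it gives an independent proof of \theoref{CB1} rather than assuming it. What it costs: you must invoke the adjunction $K\sim(m-3)H$ for a smooth plane curve of degree $m$, which the paper nowhere states (though it is classical and implicit in its use of $g=\binom{m-1}{2}$); also your closing appeal to \corref{Serret1} is superfluous, since ``the number of conditions equals $s$ if and only if no $C^{\gamma-3}$ contains $\Gamma''$'' follows directly from the Brill--Noether corollary without mentioning syzygies. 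The degenerate ranges you flag ($\gamma-3\geqslant\deg X_1$, $m+n-\gamma\leqslant0$) are not handled any more carefully by the paper's own proof.
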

\begin{proof}
Denote by $\left[a_s,b_s,c_s\right]$ the points of $X_1\cap X_2$. From the Cayley-Bacharach \theoref{CB1} every curve $C^{m+n-3}$ which passes through all but one point of $X_1\cap X_2$ necessarily passes through the remaining point. Therefore from Serret's theorem we have a syzygy
$$\sum_{s=1}^{mn}k_s(a_sx+b_sy+c_sz)^{m+n-3}=0, k_s\in \C^\times.$$
Since this last equation is an identity in $x,y,z$, it can be differentiated repeatedly with respect to the variables $x,y,z$. Then, if $F\left(\dfrac{\partial}{\partial x},\dfrac{\partial}{\partial y},\dfrac{\partial}{\partial z}\right)$ is a homogeneous polynomial of degree $\nu$ in the operators, we evidently have
$$\sum_{s=1}^{mn}k_sF(a_s,b_s,c_s)(a_sx+b_sy+c_sz)^{m+n-3-\nu}=0.$$
In particular, taking $\nu=m+n-\gamma$
$$\sum_{s=1}^{mn}k_sF(a_s,b_s,c_s)(a_sx+b_sy+c_sz)^{\gamma-3}=0.$$
Thus if $F$ is the equation of the curve $C^{m+n-\gamma}$ provided by the hypothesis of the theorem, we obtain an identity involving only $\dfrac{1}{2}(\gamma-1)(\gamma-2)$ of the points. But from the Riemann-Roch theorem we know that there always exists a curve $C^{\gamma-3}$ passing through any given $\dfrac{1}{2}\gamma(\gamma-3)$ given points of $X_1$. Indeed if $\Lambda$ is that set of points considered as an effective divisor
$$ l((\gamma-3)H)-l((\gamma-3)H-\Lambda)=\dfrac{1}{2}\gamma(\gamma-3)+l(K-(\gamma-3)H)-l(K-(\gamma-3)H+\Lambda),$$
with $K$ and $H$ defined with respect to $X_1$. Because $\Lambda\geq0$ it follows that $l(K-(\gamma-3)H)-l(K-(\gamma-3)H+\Lambda)\leq0$, by definition. Thus $l((\gamma-3)H)-l((\gamma-3)H-\Lambda)\leq\dfrac{1}{2}\gamma(\gamma-3)$ which is strictly less than the dimension $\dfrac{1}{2}(\gamma-1)(\gamma-2)$ of the vector space of homogeneous polynomials of degree $\gamma-3$ in three variables. Given the interpretation we gave of the quantity $l((\gamma-3)H)-l((\gamma-3)H-\Lambda)$, we conclude that there exists a non-trivial polynomial of degree $\gamma-3$ vanishing on $\Lambda$. Hence if the remaining $\dfrac{1}{2}(\gamma-1)(\gamma-2)$ points do not lie on a $C^{\gamma-3}$, they must all lie on the given curve of degree $m+n-\gamma$, by Serret's theorem. 
\end{proof}
We are finally ready for the main result of this work:

\begin{theorem} 
\label{second}
Let $C$ be a smooth projective plane curve of degree $n\geqslant2$ with equation given by $F(\xi,\eta,\zeta)=0$. Let  $\left[\xi,\eta,\zeta\right]$ be the coordinates of its points expressed as functions of a uniformizing parameter $t$. Then any real analytic solution $V$ of the equation \begin{equation}
\label{f3M}
\displaystyle F\left(\dfrac{\partial}{\partial x},\dfrac{\partial}{\partial y},\dfrac{\partial}{\partial z}\right) \phi(x,y,z)=0
\end{equation}
 on a sufficiently small open set can be put in the form
\begin{equation}
\label{f5}
\displaystyle V(x,y,z)=\int \Phi(\xi x+\eta y+\zeta z, t)dt,
\end{equation}
for a suitable path of integration.
\end{theorem}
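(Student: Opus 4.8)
The plan is to run, at the level of structure, the argument used for the Laplace equation earlier in this section, substituting for the explicit Fourier computation the facts about smooth plane curves recalled above. First I would translate so that the given open set contains the origin, and use real analyticity to write $V=\sum_{k\ge 0}V_k$ with $V_k\in\C[x,y,z]_k$ homogeneous of degree $k$, the series converging for $|x|+|y|+|z|$ small. Since the operator $F\bigl(\tfrac{\partial}{\partial x},\tfrac{\partial}{\partial y},\tfrac{\partial}{\partial z}\bigr)$ is homogeneous of degree $n$, equation \eqref{f3M} is equivalent to $F(\partial)V_k=0$ for every $k$ (an empty condition for $k<n$); hence each $V_k$ lies in
$$W_k:=\ker\Bigl(F(\partial)\colon\C[x,y,z]_k\longrightarrow\C[x,y,z]_{k-n}\Bigr).$$

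The next step identifies $W_k$. For $p=[\xi:\eta:\zeta]\in\C P_2$ write $\ell_p=\xi x+\eta y+\zeta z$. The identity appearing in the proof of Serret's \theoref{Serret}, namely $G\bigl(\tfrac{\partial}{\partial x},\tfrac{\partial}{\partial y},\tfrac{\partial}{\partial z}\bigr)\ell_p^k=k!\,G(\xi,\eta,\zeta)$ for $G$ homogeneous of degree $k$, shows (taking $G=FA$) that $F(\partial)\ell_p^k=0$ whenever $p\in C$, so that $\ell_p^k\in W_k$ for all $p\in C$. Conversely, the pairing $(G,P)\mapsto G(\partial)P$ on $\C[x,y,z]_k\times\C[x,y,z]_k$ is perfect and carries multiplication by $F$ to $F(\partial)$; thus $W_k=\bigl(F\cdot\C[x,y,z]_{k-n}\bigr)^{\perp}=I(C)_k^{\perp}$, where $I(C)=(F)$ is the homogeneous ideal of the smooth irreducible curve $C$, while the same identity gives $\mathrm{span}\{\ell_p^k:p\in C\}^{\perp}=\{G\in\C[x,y,z]_k:G|_C\equiv0\}=I(C)_k$. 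Hence $W_k=\mathrm{span}\{\ell_p^k:p\in C\}$. (Alternatively, $F(\partial)$ is surjective because its transpose, multiplication by $F$, is injective, so $\dim W_k=\binom{k+2}{2}-\binom{k-n+2}{2}=l(kH)$ by the Riemann--Roch \theoref{RR}, which is the dimension of the span by the Corollary to the Brill--Noether \theoref{BN1}; for $n=2$ this number is $2k+1$, recovering the count made for the Laplacian.)

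Now I would fix a small arc $\gamma$ in the $t$-disk on which $t\mapsto[\xi(t):\eta(t):\zeta(t)]$ is a non-constant holomorphic parametrization of a piece of $C$, together with holomorphic representatives $\xi(t),\eta(t),\zeta(t)$. Since $C$ is irreducible, a form vanishing at the infinitely many points $p(\gamma)$ vanishes on all of $C$, so $\{\ell_{p(t)}^k:t\in\gamma\}$ still spans $W_k$ for each $k$; consequently the linear map $f\mapsto\int_\gamma\ell_{p(t)}^k f(t)\,dt$ onto $W_k$ is surjective (a functional $\phi$ annihilating its image would make $t\mapsto\phi(\ell_{p(t)}^k)$ vanish on $\gamma$, hence $\phi$ vanish on the span). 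So for every $k$ one can choose a density $f_k$ on $\gamma$ with $V_k=\int_\gamma\ell_{p(t)}^k f_k(t)\,dt$, and, setting $\Phi(w,t):=\sum_{k\ge0}f_k(t)\,w^k$, one obtains $V(x,y,z)=\sum_k\int_\gamma\ell_{p(t)}^k f_k(t)\,dt=\int_\gamma\Phi\bigl(\xi(t)x+\eta(t)y+\zeta(t)z,\ t\bigr)\,dt$, which is exactly the form \eqref{f5}.

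I expect the identification $W_k=\mathrm{span}\{\ell_p^k:p\in C\}$ to be routine given the classical results recalled in this section; the real obstacle is the last step, namely choosing the densities $f_k$ so that $\sup_\gamma|f_k|$ grows at most geometrically in $k$, which is what legitimizes interchanging the summation with the integral and guarantees that $\Phi$ is a genuine function holomorphic in its first argument near $0$. As in the Laplacian case this would be reduced to realizing, along the fixed arc $\gamma$, a basis of each $W_k$ by point-values $\ell_{p(t_i)}^k$ and to the Cauchy estimates $|a_I|\le M\rho^{-|I|}$ for the Taylor coefficients of $V$; but ensuring that the resulting bound on the $f_k$ is uniform in $k$ — where $\dim W_k$ grows linearly and the passage from coordinates of $V_k$ to densities must be controlled — is the delicate point of the proof.
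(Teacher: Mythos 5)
Your argument is correct in substance, but it reaches the key structural fact --- that the degree-$m$ homogeneous solutions of \eqref{f3M} are exactly the span of the powers $(\xi x+\eta y+\zeta z)^m$ as $[\xi:\eta:\zeta]$ ranges over $C$ --- by a genuinely different route from the paper. The paper first proves linear independence of $M=mn+1-\tfrac12(n-1)(n-2)$ such powers by a Riemann--Roch existence argument (draw a curve of degree $m$ through all but one of the chosen points and apply the corresponding differential operator), and then proves spanning via the Cayley--Bacharach theorem combined with Serret's syzygy criterion, the number $M$ itself being obtained by comparing dimensions of spaces of forms. You instead observe that the apolarity pairing $(G,P)\mapsto G(\partial)P$ on $\C[x,y,z]_m$ is perfect and intertwines multiplication by $F$ with $F(\partial)$, so that $\ker F(\partial)=\bigl(F\cdot\C[x,y,z]_{m-n}\bigr)^{\perp}=I(C)_m^{\perp}$, while the identity $G(\partial)\ell_p^m=m!\,G(p)$ identifies $I(C)_m$ as the annihilator of $\{\ell_p^m:p\in C\}$; the equality of the two spaces follows at once, and the dimension count $l(mH)=mn+1-\tfrac12(n-1)(n-2)$ becomes a byproduct rather than an input. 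This is cleaner, sidesteps Cayley--Bacharach, Serret and Brill--Noether entirely, and explains transparently why restricting $p$ to an arc of $C$ suffices: a form of degree $m$ vanishing at infinitely many points of the irreducible curve $C$ lies in $I(C)_m$. Your duality argument for the surjectivity of $f\mapsto\int_\gamma\ell_{p(t)}^m f\,dt$ plays exactly the role of \lemref{Stone}. The one place where you are no better off than the paper is the final interchange of summation and integration: you correctly flag that the densities $f_m$ must be chosen with $\sup_\gamma|f_m|$ growing at most geometrically, which requires control, uniform in $m$, of the passage from the Taylor coefficients of $V_m$ to the density $f_m$; the paper simply asserts the uniform convergence of $\sum_m(\xi x+\eta y+\zeta z)^m g_m(t)$ without supplying such a bound, so this analytic point is left open in both treatments.
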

\begin{proof}
Without loss of generality we assume $V$ to be real analytic near the origin as a function of $x$, $y$, $z$. Expand $V$ as an absolutely and uniformly convergent power series in $x$, $y$, $z$ near the origin itself.

We now apply $F\left(\dfrac{\partial}{\partial x},\dfrac{\partial}{\partial y},\dfrac{\partial}{\partial z}\right)$ to this series, and we set to zero the coefficients of the various powers. Let's look at the homogeneous part of $F\left(\dfrac{\partial}{\partial x},\dfrac{\partial}{\partial y},\dfrac{\partial}{\partial z}\right)V$ of degree $m$; when $m \geq n$ we find 
$$\dfrac{1}{2}(m-n+1)(m-n+2)$$
relations among the coefficients since the action of  $F\left(\dfrac{\partial}{\partial x},\dfrac{\partial}{\partial y},\dfrac{\partial}{\partial z}\right)$ on $V(x,y,z)$ will leave a polynomial of degree $m-n$ and $\dfrac{1}{2}(m-n+1)(m-n+2)$ represents the dimension of the space of homogeneous polynomials of degree $m-n$ in three variables. But when $m<n$ no such relations arise, because the operation of $F\left(\dfrac{\partial}{\partial x},\dfrac{\partial}{\partial y},\dfrac{\partial}{\partial z}\right)$ on the homogeneous parts of $V(x,y,z)$ of degree $m\less n$ will cancel them entirely.

As the dimension of the space of homogeneous polynomials of degree $m$ in three variables is $\dbinom{m+2}{m}$, the homogeneous part of degree $m$ of the solution $V$ is a linear combination of at most
$$\dfrac{1}{2}(m+1)(m+2)-\dfrac{1}{2}(m-n+1)(m-n+2)=mn+1-\dfrac{1}{2}(n-1)(n-2)$$
linearly independent terms when $m\geqslant n$, and of at most $\dfrac{1}{2}(m+1)(m+2)$ independent terms when $m<n$.

We first consider the case $m\geqslant n$. In order to express the terms of order $m$ into integral form \eqref{f5} we proceed as follows. If we take $M:=mn+1-\dfrac{1}{2}(n-1)(n-2)$ arbitrary points on the curve $F(\xi,\eta,\zeta)=0$ (belonging to the same domain $\mathscr{D}$ of the uniformizing parameter), then the corresponding powers (of linear forms) $(\xi x+\eta y+\zeta z)^m$ will in general be linearly independent.

For, if not, there would be a linear relation between them, of the form
\begin{equation}
\label{f6}
\displaystyle\sum_1^{M}\lambda_i(\xi_i x+\eta_i y+\zeta_i z)^m=0,\,\lambda_i\in\C,\,\forall \,i.
\end{equation}
Leaving out one of the $M$ points, say $(\xi_1,\eta_1,\zeta_1)$, we can draw a curve of the $m$-th order $f(\xi,\eta,\zeta)=0$ through the remaining $M-1$ points. Indeed with the notations introduced above we have
$$l(mH-\Lambda)=\deg(mH-\Lambda)+1-g+l(K-mH+\Lambda),$$
where $\Lambda$ is the set of the $mn-\dfrac{1}{2}(n-1)(n-2)$ chosen points, considered as an effective divisor. From this last equality we deduce that $l(mH-\Lambda)\geq1$. Because $L(mH-\Lambda)$ is interpreted \cite{eisenbud1996} as the vector space of homogeneous polynomials of degree $m$ vanishing on $\Lambda$ modulo those of degree $m$ vanishing on $C$, we can ascertain that there exists a curve $f$ of degree $m$ passing through the $mn-\dfrac{1}{2}(n-1)(n-2)$ remaining points, and not vanishing identically on $C$.
We now operate on the equation \eqref{f6} by $f\left(\dfrac{\partial}{\partial x},\dfrac{\partial}{\partial y},\dfrac{\partial}{\partial z}\right)$. Then the terms corresponding to the points disappear on account of the relation
\begin{equation}
\label{f7}
\displaystyle f\left(\dfrac{\partial}{\partial x},\dfrac{\partial}{\partial y},\dfrac{\partial}{\partial z}\right)(\xi x+\eta y+\zeta z)^m=m!f(\xi,\eta,\zeta)
\end{equation}
and we are left with the equation
\begin{equation}
\label{f8}
\displaystyle\lambda_1f(\xi_1,\eta_1,\zeta_1)=0.
\end{equation}
Therefore either $f(\xi_1,\eta_1,\zeta_1)=0$, in which case all the points lie on a curve of the $m$-th degree and they would not have been chosen arbitrarily; or $\lambda_1=0$. But $\lambda_1$ can be taken to be anyone of the coefficients; hence all the coefficients are zero and the syzygy or linear relation \eqref{f6} does not exist.

This shows that we have $M=mn+1-\dfrac{1}{2}(n-1)(n-2)$ independent solutions $(s_j)_{1\leqslant j\leqslant M}$(or tangential equation) of the equation $F\left(\frac{\partial}{\partial x},\frac{\partial}{\partial y},\frac{\partial}{\partial z}\right) \phi(x,y,z)=0$; we obtain a linear relation between the $(m)^{th}$ powers of the tangential equations of any $mn+2-\dfrac{1}{2}(n-1)(n-2)$ points on the curve $C$ as follows: draw a curve $C^{m+1}$ (not containing $C$), $\chi(x,y,z)=0$, of degree $m+1$ through these points (this exists by a similar Riemann-Roch argument as before); this curve will cut the curve $C$ again in $\dfrac{1}{2}(n+1)(n-2)$ points by Bezout's theorem. Through these we may draw a curve $C^{n-2}$ of degree $n-2$, say $G(x,y,z)=0$. Now by the Cayley-Bacharach \theoref{CB1} together with the Serret's theorem, we may deduce that there is a linear relation between the $(m+n-2)^{th}$ powers of the tangential equations of the points of intersection of $F=0$ and $\chi=0$. This relation has the form
$$\sum_{r=1}^{n(m+1)}k_r(x\xi_r+y\eta_r+z\zeta_r)^{m+n-2}=0,\,k_r\in\C^\times.$$
Operating on this syzygy with $\displaystyle G\left(\dfrac{\partial}{\partial x},\dfrac{\partial}{\partial y},\dfrac{\partial}{\partial z}\right)$, the terms corresponding to $G=0$ disappear, and we are left with the equation
$$\sum_{1}^{mn+2-\frac{1}{2}(n-1)(n-2)}k_rG(\xi_r,\eta_r,\zeta_r)(x\zeta_r+y\eta_r+z\zeta_r)^{m}=0.$$

Similarly, when $m<n$ we may take $\dbinom{m+2}{m}$ points on the curve $C$ which do not lie on a curve of the $m^{th}$ degree. This is possible by \corref{Serret1} and the fact that $m<n$. 

We observe that for these so chosen $\dbinom{m+2}{m}$ the corresponding tangential equations are linearly independent. This follows from the fact that one can always draw a curve $C^m$ of degree $m$ through any $\dfrac{1}{2}m(m+3)$ given points by Riemann-Roch theorem, as before. Thus we obtain a linear relation between the $m^{th}$ powers of the tangential equations of any $\dfrac{1}{2}(m+2)(m+1)+1$ points on the curve, and which satisfy the equation \eqref{f3M}.

So the conclusion of what we have said so far is that we can find a basis of the space of homogeneous polynomials of degree $m$ which satisfy the partial differential equation:  $$F(\frac{\partial}{\partial x},\frac{\partial}{\partial y},\frac{\partial}{\partial z})\phi(x,y,z)=0$$ in the form $(\xi_i x+\eta_i y+\zeta_i z)^m,\,1\leqslant i\leqslant r$ for $r$ well-chosen points on $C$ with $r=mn+1-\dfrac{1}{2}(n-1)(n-2)$ in case $m\geqslant n$ and $r=\dbinom{m+2}{2}$ when $m<n$.

In the two cases if $r$ is the number of independent solutions, we have that
\begin{equation}
\label{f9}
\displaystyle(\xi(t) x+\eta(t) y+\zeta(t) z)^m=\sum_1^r(\xi_i x+\eta_i y+\zeta_i z)^m\mu_i(t)
\end{equation}
is a solution of the partial differential equation \eqref{f3M} with $\left[\xi,\eta,\zeta\right]\in C$, expressed as a function of the uniformizing parameter $t$. 

Indeed when $m<n$ we have $F\left(\dfrac{\partial}{\partial x},\dfrac{\partial}{\partial y},\dfrac{\partial}{\partial z}\right)(\xi x+\eta y+\zeta z)^m=0$ for degree reasons. And when $m\geqslant n$ we have

$$F\left(\dfrac{\partial}{\partial x},\dfrac{\partial}{\partial y},\dfrac{\partial}{\partial z}\right)(\xi_i x+\eta_i y+\zeta_i z)^m=F(\xi_i,\eta_i,\zeta_i)(\xi_i x+\eta_i y+\zeta_i z)^{m-n}.$$
The $\mu_i$ are analytic functions of $t$ because if $(\chi_i)_{1\leqslant i\leqslant r}$ is the basis dual to $\{(\xi_i x+\eta_i y+\zeta_i z)^m\}_{1\leqslant i\leqslant r}$ then $\chi_i((\xi (t)x+\eta(t) y+\zeta(t) z)^m)=\mu_i(t)$.

Let us take the $\mu_i$ as in \eqref{f9}; we remark that the $\mu_i$ are linearly independent over $\C$ as functions of $t$. Indeed if not, $(\xi(t) x+\eta(t) y+\zeta(t) z)^m$ would be expressible as a linear combination of $r-1$ solutions to equation \eqref{f3M}, and we would be able to obtain a syzygy among the tangential equations of any r points on the curve. More precisely if for instance $\mu_1$ is expressed as a linear combination of the other $(\mu_i)_{2\leqslant i\leqslant r}$ then we have a relation of the form
$$(\xi(t) x+\eta(t) y+\zeta(t) z)^m=\displaystyle\sum_1^{r-1}\alpha_j(t)H_j(x,y,z)$$
for fixed $(H_j(x,y,z))_{1\leqslant j\leqslant r-1}$ (solution to \eqref{f3M}). Let us take now $r$ arbitrary points $\left[\xi^0_k,\eta_k^0,\zeta_k^0\right]_{1\leqslant k\leqslant r}$ on the curve $C$ in the domain of the uniformizing parameter $t$. Then there exists $(t_k)_{1\leqslant k\leqslant r}$ so that $\left[\xi^0_k,\eta_k^0,\zeta_k^0\right]=\left[\xi(t_k),\eta(t_k),\zeta(t_k)\right]$, $1\leqslant k\leqslant r$. Thus the corresponding $m$-th powers of linear forms $(\xi_i^0 x+\eta_i^0 y+\zeta^0_k z)^m$, $1\leqslant k\leqslant r$ are expressed as linear combinations of the $r-1$ fixed given vectors $H_j$. Hence by linear algebra, $(\xi_i^0 x+\eta_i^0 y+\zeta^0_k z)^m$ are linear dependent. This contradicts the above.

We consider $r$ functions $(f_s)_{1\leqslant s\leqslant r}$ (analytic). We have

\begin{equation}
\label{f10}
\displaystyle \int(\xi x+\eta y+\zeta z)^mf_s(t)dt=\sum_1^r(\xi_i x+\eta_i y+\zeta_i z)^m\int\mu_i(t)f_s(t)dt.
\end{equation}
and the determinant of the matrix $(\displaystyle\theta_{s,i})_{1\leqslant s\leqslant r,1\leqslant i\leqslant r}:=(\int\mu_i(t)f_s(t)dt)_{1\leqslant i\leqslant r,1\leqslant s \leqslant r}$ will, in general, be non zero (this follows by a suitable generalization of \lemref{Stone} below). 

Accordingly we may choose $r$ constants $\lambda_j$, so that the expressions
\begin{equation}
\label{f11}
\displaystyle\lambda_1\theta_{1,i}+\lambda_2\theta_{2,i}+\ldots+\lambda_r\theta_{r,i}\qquad (i=1,2,\ldots,r)
\end{equation}
take any $r$ assigned values $p_1$,$\ldots$,$p_r$, and we have
\begin{equation}
\label{f12}
\displaystyle \int(\xi(t) x+\eta(t) y+\zeta(t) z)^m\sum_1^r(\lambda_sf_s)(t)dt=\sum_1^rp_i(\xi_i x+\eta_i y+\zeta_i z)^m.
\end{equation}
But any homogeneous polynomial of degree $m$ solution of equation \eqref{f3M} can be expressed in the form 
\begin{equation}
\label{f13}
\displaystyle \sum_1^rp_i(\xi_i x+\eta_i y+\zeta_i z)^m,
\end{equation}
and can therefore be put in the form
\begin{equation}
\label{f14}
\displaystyle \int(\xi x+\eta y+\zeta z)^mg_m(t)dt,\,\,g_m(t)=\sum_1^r(\lambda_sf_s)(t).
\end{equation}
The series $\displaystyle\sum_{m\geqslant0}((\xi x+\eta y+\zeta z)^mg_m)(t)$ converges uniformly on compact sets of the domain $\mathscr{D}$ of the uniformizing parameter $t$ provided that $|x|+|y|+|z|$ is small. Hence if we integrate on a compact path $\mathscr{L}$ of $\mathscr{D}$, we can write 
\begin{equation}
\label{f15}
\displaystyle V=\int\sum_{m\geqslant0}(\xi x+\eta y+\zeta z)^mg_m(t)dt=\int\Phi(\xi x+\eta y+\zeta z,t)dt
\end{equation}
which is the desired form of the solution.
\end{proof}
\begin{lemma}
\label{Stone}
Let $\nu_1$, $\ldots$, $\nu_N$ linearly independent over $\C$, continuous on a segment $\left[a,b\right]$, with $-\infty <a<b<+\infty$ to fix ideas. Then there exists $N$ continuous functions $l_1$, $\ldots$, $l_N$ with $$\det(\int_a^b\nu_i(u)l_j(u)du)\not=0.$$\end{lemma}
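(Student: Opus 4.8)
The plan is to produce an explicit choice of the functions $l_j$ and to recognise the resulting matrix as a Gram matrix. First I would regard $\nu_1,\dots,\nu_N$ as elements of the complex pre-Hilbert space $C([a,b])$ equipped with the Hermitian inner product $\langle f,g\rangle:=\int_a^b f(u)\overline{g(u)}\,du$; this is legitimate because the $\nu_i$ are continuous, and they remain linearly independent over $\C$ in this ambient space since linear independence in the stated sense is literally linear independence of these vectors. I would then set
\[
l_j:=\overline{\nu_j}\qquad(1\le j\le N),
\]
which is again continuous on $[a,b]$, so that the matrix appearing in the statement is exactly
\[
\Bigl(\int_a^b\nu_i(u)\,l_j(u)\,du\Bigr)_{1\le i,j\le N}=\bigl(\langle\nu_i,\nu_j\rangle\bigr)_{1\le i,j\le N}=:G,
\]
the Gram matrix of the family $(\nu_i)_{1\le i\le N}$.

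What remains is the classical fact that the Gram matrix of a linearly independent family in an inner product space is nonsingular, which I would prove directly rather than quote. Suppose $c=(c_1,\dots,c_N)\in\C^N$ lies in the kernel of $G$, and put $g:=\sum_{j=1}^N\overline{c_j}\,\nu_j\in C([a,b])$. Then $\langle\nu_i,g\rangle=\sum_{j}\langle\nu_i,\nu_j\rangle c_j=(Gc)_i=0$ for every $i$, whence
\[
\langle g,g\rangle=\sum_{j}\overline{c_j}\,\langle\nu_j,g\rangle=0,
\]
that is $\int_a^b|g(u)|^2\,du=0$. Since $g$ is continuous this forces $g\equiv0$ on $[a,b]$, and linear independence of the $\nu_j$ then yields $c_j=0$ for all $j$. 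Hence $\ker G=\{0\}$, so $\det G\neq0$, which is precisely the assertion of the lemma.

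I do not expect any genuine obstacle here: the entire content is the non-degeneracy of the $L^2$-pairing on continuous functions — a continuous $g$ with $\int_a^b|g|^2=0$ vanishes identically — together with elementary linear algebra. (Equivalently, one may argue that the linear map $C([a,b])\to\C^N$, $l\mapsto\bigl(\int_a^b\nu_i\,l\bigr)_{1\le i\le N}$, is surjective: if some $c\neq0$ annihilated its image then $h:=\sum_i c_i\nu_i$ would satisfy $\int_a^b h(u)l(u)\,du=0$ for all continuous $l$, and taking $l=\overline{h}$ gives $\int_a^b|h|^2=0$, hence $h\equiv0$, contradicting linear independence; then one picks $l_j$ mapping to the standard basis vectors, for which the determinant equals $1$.) For the application in \theoref{second} one parametrises the integration path $\mathscr{L}\subset\mathscr{D}$ by a piecewise-$C^1$ regular map $\gamma$, replaces $\nu_i(u)$ by $\mu_i(\gamma(u))\gamma'(u)$ — still continuous, and still linearly independent since a nontrivial vanishing combination of the $\mu_i$ along $\mathscr{L}$ would vanish on all of the connected domain $\mathscr{D}$ by the identity theorem — and applies the lemma; approximating the resulting $l_j$ by polynomials in $t$ (admissible because non-vanishing of a determinant is an open condition) furnishes the analytic functions $f_s$ used there.
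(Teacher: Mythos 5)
Your proof is correct, but it takes a genuinely different and more elementary route than the paper's. You choose $l_j=\overline{\nu_j}$ and observe that the matrix in question is then the Gram matrix of the family $(\nu_i)$ for the $L^2$ inner product on $C([a,b])$, whose nonsingularity for a linearly independent family follows from the non-degeneracy of the pairing ($g$ continuous with $\int_a^b|g|^2=0$ forces $g\equiv0$); your kernel computation is carried out correctly. The paper instead introduces the entire functions $F_k(s)=\int_a^b\nu_k(u)e^{su}\,du$, proves their linear independence via Stone--Weierstrass, invokes the fact that linearly independent analytic functions have a Wronskian that is not identically zero to find $s_0$ with $W(F_1,\dots,F_N)(s_0)\neq0$, and takes $l_j(u)=u^{j-1}e^{s_0u}$. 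What the paper's longer detour buys is that the resulting $l_j$ are automatically real-analytic, which is what the application in Theorem~\ref{second} actually requires of the functions $f_s$; your $l_j=\overline{\nu_j}$ are only continuous (conjugation destroys analyticity), but you anticipate this and correctly repair it by noting that non-vanishing of the determinant is an open condition, so one may replace the $l_j$ by polynomial (hence analytic) approximants. With that final remark included, your argument fully serves the purpose for which the lemma is invoked, and is arguably cleaner than the paper's.
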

\begin{proof}
We define
$$F_k(s)=\int_a^b\nu_k(u)e^{su}du,s\in\C.$$
$F_k$ is entire, for every $k$. Besides $F_1$, $F_2$, $\ldots$, $F_N$ are linearly independent. Indeed if one has a relation
$$\displaystyle\sum_{k=1}^Nc_kF_k=0$$
then $\sum_{k=1}^Nc_k\int_a^b\nu_k(u)e^{su}du=0$ for all $s\in\C$. Therefore $\int_a^b(\sum_1^Nc_k\nu_k(u))u^pe^{su}du=0$ for all $s\in\C$ and $p\geqslant0$. So $\int_a^b(\sum_1^Nc_k\nu_k(u))R(u)du=0$ for every polynomial $R(u)$ by linearity and taking $s=0$. Hence by the Stone-Weiertrass theorem $\sum^N_1c_k\nu_k=0$ and $\lambda_k=0$ for every $k$ by linear independence of the $\nu_k$. Now the $(F_k)_{1\leqslant k\leqslant N}$ are linearly independent and they form a basis of the linear differential equation $W(y,F_1,\ldots, F_N)=0$ where $W$ is the Wronskian. And then the Wronskian determinant of $F_1$, $\ldots$, $F_N$ is non identically zero, hence it does not vanish for some $s_0\in\C$. So taking $l_j(u)=u^{j-1}e^{s_0u}$, $1\leqslant j\leqslant N$ we find that
$$\det(\int_a^b\nu_i(u)l_j(u)du)\not=0.$$
\end{proof}
\section{The kernel of the partial differential operator $F\left(\frac{\partial}{\partial x_1},\frac{\partial}{\partial x_2},\ldots,\frac{\partial}{\partial x_d}\right)$ }
\label{batemannew}
In this section we prove that one can always represent any real analytic function in the kernel of $F\left(\frac{\partial}{\partial x_1},\frac{\partial}{\partial x_2},\ldots,\frac{\partial}{\partial x_d}\right)$ on a sufficiently small open set, by means of a definite integral. Before doing so we need some preliminaries.

Let $X\subset \C P_{d}$ a projective variety with vanishing ideal $I(X)\subset \C[z_0 ,\ldots,z_d]$. For $m\geqslant1$, we denote by $I(X)_m$ the $m$-th homogeneous part $I(X)\cap \C[z_0,\ldots,z_d]_m$ of $I(X)$, where $\C[z_0,\ldots,z_d]_m$ is the subset of homogeneous polynomials of degree $m$ in $\C[z_0,\ldots,z_d]$. Since $I(X)$ is a homogeneous ideal, the homogeneous coordinate ring $S(X)$ is a graded ring with decomposition
$$S(X)=\displaystyle\bigoplus_{m\geqslant0}S(X)_m$$
where $S(X)_m=\C[z_0 ,\ldots,z_d]/I(X)_m$. Each homogeneous part $I(X)_m$ is a linear subspace of the $\dbinom{d+m}{d}$ dimensional $\C$-vector space $\C[z_0,\ldots,z_d]_m$. The dimension of $I(X)_m$ is the number of independent hypersurfaces of degree $m$ containing $X$. 

If $X\subset\C P_d$ is a hypersurface given by some irreducible homogeneous polynomial $F$ of degree $n$. The $m$-th homogeneous part $I(X)_m$ then consists of all polynomials of degree $m$ divisible by $F$. So we can identify $I(X)_m$ with $\C[z_0,\ldots,z_d]_{m-n}$ for $m\geqslant n$. So that 
$$\dim(I(X)_m)=\dbinom{m-n+d}{d}.$$

Let us introduce the following theorem of Serret \cite[p.~99-100]{semple1949}
\begin{theorem}[Serret]
\label{Serret2}
Consider $p=\left[a_0,a_1,\ldots,a_d\right]$ a point of $\C P_d$ and associate to it the linear form $l_p(z_0,z_1,\ldots,z_d)=a_0z_0+a_1z_1+\ldots a_nz_d$. Then the necessary and sufficient condition that any given hypersurface $C^r$, of given degree $r$, which passes through $q-1$ of a set of $q$ given points of $\C P_d$ should pass through the remaining point, is that there should be a linear relation (or syzygy) connecting the $r^{th}$ powers of the linear forms (or by abuse of language, tangential equation) associated to each given point.
\end{theorem}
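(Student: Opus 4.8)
The plan is to mirror the proof of the planar case, \theoref{Serret}, the only new ingredient being a short translation of the hypothesis into linear algebra. Write $W:=\C[z_0,\dots,z_d]_r$ for the space of degree-$r$ forms, and for a point $p=[a_0,\dots,a_d]$ (with a fixed representative) set $l_p:=a_0z_0+\dots+a_dz_d$, let $\phi_p\colon h\mapsto h(a_0,\dots,a_d)$ be the evaluation functional on $W$, and let $D_h$ denote the constant-coefficient differential operator $h(\partial/\partial z_0,\dots,\partial/\partial z_d)$ attached to a form $h\in W$. The monomial computation that produced the identity in three variables goes through verbatim, giving $D_h\big((l_p)^r\big)=r!\,h(p)$ for all $h\in W$. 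I will read the statement, in line with \theoref{Serret}, as: for every $i$, every degree-$r$ hypersurface through $\{p_j:j\neq i\}$ also passes through $p_i$, if and only if there is a syzygy $\sum_{i=1}^{q}\lambda_i(l_{p_i})^r=0$ with every $\lambda_i\neq0$.

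For sufficiency I would repeat the argument of \theoref{Serret}: given such a syzygy and a hypersurface $C^r=\{h=0\}$ vanishing at $p_j$ for all $j\neq i_0$, apply $D_h$ to the syzygy and use $D_h(l_{p_i})^r=r!\,h(p_i)$ to obtain $r!\sum_i\lambda_i h(p_i)=0$; all terms with $j\neq i_0$ vanish and $\lambda_{i_0}\neq0$, so $h(p_{i_0})=0$. This direction is a single line.

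Necessity carries the content — it is the part left, in the planar case, to \cite[p.~99]{semple1949}. The first step is to observe, again from $D_h(l_p)^r=r!\,h(p)$, that for $\lambda\in\C^q$ the combination $\sum_i\lambda_i(l_{p_i})^r$ is the zero form if and only if $\sum_i\lambda_i\phi_{p_i}=0$ in $W^{*}$: apply an arbitrary order-$r$ constant-coefficient operator to the combination and use that such operators separate the points of $W$ (equivalently, the apolarity pairing on $W$ is non-degenerate). Thus syzygies among the $(l_{p_i})^r$ are exactly the linear relations among the evaluation functionals $\phi_{p_1},\dots,\phi_{p_q}$. The second step: the hypothesis ``every degree-$r$ hypersurface through $\{p_j:j\neq i\}$ passes through $p_i$'' is precisely $\bigcap_{j\neq i}\ker\phi_{p_j}\subseteq\ker\phi_{p_i}$, i.e. $\phi_{p_i}$ lies in the span of $\{\phi_{p_j}:j\neq i\}$, and this holds for every $i$. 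The third step: letting $N\subseteq\C^q$ be the space of relations $\sum_i\lambda_i\phi_{p_i}=0$, the condition ``$\phi_{p_i}$ in the span of the others'' is equivalent to ``$N\not\subseteq\{\lambda_i=0\}$''; imposing this for every $i$ and using that a linear subspace of $\C^q$ is never a finite union of proper subspaces (here it matters that $\C$ is infinite), $N$ cannot lie in the union of the $q$ coordinate hyperplanes, so it contains some $\lambda$ with all $\lambda_i\neq0$. That $\lambda$ is the syzygy sought, which proves necessity and closes the equivalence.

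The main obstacle is the first step of the necessity argument: establishing that a syzygy among the powers $(l_{p_i})^r$ is the same thing as a linear relation among the evaluation functionals $\phi_{p_i}$. This rests on the fact — valid in characteristic zero — that every linear functional on the space of degree-$r$ forms is of the form $h\mapsto D(h)$ for an order-$r$ constant-coefficient operator $D$, i.e. the non-degeneracy of the apolarity (catalecticant) pairing, equivalently the classical statement that the $r$-th powers of linear forms span all degree-$r$ forms. Once this dictionary is fixed, the remaining ingredients — the one-line sufficiency argument, the kernel-and-span reformulation of the hypothesis, and the fact that a vector space over an infinite field is not a finite union of proper subspaces — are all routine.
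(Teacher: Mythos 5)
Your proof is correct, and it does strictly more than the paper does. The paper's ``proof'' of \theoref{Serret2} consists of the single sentence that the argument is similar to the case $d=2$ and a pointer to \cite[p.~99-100]{semple1949}; and even in the planar case (\theoref{Serret}) the paper only writes out the sufficiency direction --- the identity $h\left(\frac{\partial}{\partial z_0},\dots,\frac{\partial}{\partial z_d}\right)(l_p)^r=r!\,h(p)$ followed by applying $D_h$ to the syzygy --- and defers the converse entirely to the reference. Your sufficiency argument is exactly the paper's. What you add is a self-contained proof of necessity: the dictionary (via non-degeneracy of the apolarity pairing in characteristic zero) identifying syzygies among the $(l_{p_i})^r$ with linear relations among the evaluation functionals $\phi_{p_i}$, the reformulation of the geometric hypothesis as $\bigcap_{j\neq i}\ker\phi_{p_j}\subseteq\ker\phi_{p_i}$ for every $i$, and the observation that the relation space $N$ cannot be covered by the $q$ coordinate hyperplanes since $\C$ is infinite, which produces a syzygy with all coefficients non-zero. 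Your reading of the statement (all $\lambda_i\neq 0$, and the hypothesis quantified over every choice of omitted point) is the right one --- it matches the parenthetical in the planar statement \theoref{Serret} and is needed for sufficiency to apply to an arbitrary omitted point. The only ingredient you lean on that the paper never makes explicit is that every linear functional on $\C[z_0,\dots,z_d]_r$ arises from an order-$r$ constant-coefficient operator (equivalently, that $r$-th powers of linear forms span the degree-$r$ forms); you correctly flag this, and it is classical over $\C$. In short: same method where the paper gives one, and a complete argument where the paper gives only a citation.
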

\begin{proof}
The proof is similar to the case $d=2$. See \cite[p.~99-100]{semple1949}.
\end{proof}
\begin{corollary}
\label{Serret3}
When $q=\dbinom{m+d}{d}$ Serret's theorem gives the necessary and sufficient condition that $q$ points should lie on a hypersurface of degree $m$.
\end{corollary}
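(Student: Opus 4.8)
The plan is to follow the proof of \corref{Serret1} in spirit, with two changes: the Riemann--Roch dimension count on a fixed smooth curve is replaced by the trivial dimension count on all of $\C P_d$, and the ``only if'' half is obtained from the linear-algebra duality that already powers the proof of Serret's \theoref{Serret2}. The single new numerical input is the equality $\dim\C[z_0,\ldots,z_d]_m=\binom{m+d}{d}=q$: since the vanishing of a form of degree $m$ at a point is one linear condition, any $q-1$ given points of $\C P_d$ impose at most $q-1$ conditions on the $q$-dimensional space of degree-$m$ forms, so a hypersurface of degree $m$ passes through any $q-1$ prescribed points.

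For the ``if'' direction, assume there is a syzygy $\sum_{i=1}^{q}\lambda_i(l_{p_i})^m=0$. Choose any $q-1$ of the $q$ points; by the dimension count just made there is a hypersurface $C^m$ of degree $m$ through them, and the sufficiency half of Serret's \theoref{Serret2} (with $r=m$) then forces $C^m$ to contain the remaining point as well, so all $q$ points lie on a common hypersurface of degree $m$. This direction also drops out of the duality used below.

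For the ``only if'' direction I would argue by transposition. Because $\dim\C[z_0,\ldots,z_d]_m=\binom{m+d}{d}=q$, both the map $A\colon\C^{q}\to\C[z_0,\ldots,z_d]_m$ sending $(\lambda_1,\ldots,\lambda_q)$ to $\sum_i\lambda_i(l_{p_i})^m$ and the map $B\colon\C[z_0,\ldots,z_d]_m\to\C^{q}$ sending $h$ to $(h(p_1),\ldots,h(p_q))$ go between $q$-dimensional spaces, so each one is injective if and only if surjective if and only if of rank $q$. A syzygy among the $(l_{p_i})^m$ exists exactly when $A$ is not injective, while the $q$ points lie on a common hypersurface of degree $m$ exactly when $\ker B\neq 0$, i.e.\ when $B$ is not surjective. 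The identity $h\left(\frac{\partial}{\partial z_0},\ldots,\frac{\partial}{\partial z_d}\right)(l_p)^m=m!\,h(p)$ from the proof of Serret's theorem, together with the nondegeneracy of the apolarity pairing $(h,g)\mapsto h(\partial)g$ on $\C[z_0,\ldots,z_d]_m$ (it is diagonal with nonzero entries in the monomial basis), identifies $B$ with $\frac{1}{m!}$ times the transpose of $A$ under the self-duality of $\C[z_0,\ldots,z_d]_m$; in particular $\operatorname{rank}A=\operatorname{rank}B$. Chaining these equivalences gives: the $q$ points lie on a hypersurface of degree $m$ if and only if there is a syzygy connecting the $m$-th powers $(l_{p_i})^m$, which is exactly Serret's condition, and as a byproduct it re-proves the ``if'' direction.

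The main obstacle is the transpose identification just used, and it is exactly there that the \emph{equality} $q=\binom{m+d}{d}$ matters: only because $A$ and $B$ go between spaces of one and the same dimension $q$ does ``full rank'' for one of them translate into the injective-versus-surjective dichotomy needed for the other. Everything else is routine: the existence of a degree-$m$ hypersurface through $q-1$ points is a one-line count, Serret's \theoref{Serret2} is already available, and the constant $m!$ in $h(\partial)(l_p)^m$ together with the formula $\dim\C[z_0,\ldots,z_d]_m=\binom{m+d}{d}$ are standard.
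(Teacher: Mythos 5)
Your proof is correct, and its skeleton matches the paper's: both rest on the count $\dim\C[z_0,\ldots,z_d]_m=\binom{m+d}{d}=q$, which yields a degree-$m$ hypersurface through any $q-1$ of the points, and both obtain the sufficiency of the syzygy from \theoref{Serret2}. Where you genuinely diverge is the converse. The paper disposes of it in one line by invoking the necessity half of Serret's theorem, which it never proves but only cites from \cite{semple1949}; you instead prove it directly, identifying the evaluation map $B$ with $\frac{1}{m!}$ times the transpose of $A$ via the apolarity pairing and exploiting that both maps act between spaces of the common dimension $q$, so that $\ker B\neq 0$ forces $A$ to be non-injective. This buys a self-contained argument, and it is also more robust: it produces a syzygy even when some of its coefficients vanish, which is exactly what happens for degenerate point configurations (e.g.\ too many of the points on a linear subspace), a case where the ``all coefficients non-zero'' caveat in the planar statement \theoref{Serret} makes the paper's appeal to ``the necessity statement'' delicate. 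One small caution on your first, Serret-based run at the ``if'' direction: if the syzygy has some zero coefficients, you must omit a point whose coefficient is non-zero, since applying $h\left(\frac{\partial}{\partial z_0},\ldots,\frac{\partial}{\partial z_d}\right)$ to the syzygy only controls $\lambda_1 h(p_1)$ for the omitted index; as you note, your rank argument covers this case anyway, so the proof as a whole stands.
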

\begin{proof}
Indeed there always exists a hypersurface of degree $m$ which passes through any $q-1=\dbinom{m+d}{d}-1$ given points of the fixed smooth hypersurface $C$. So if there is linear relation between the $m^{th}$ powers of the linear terms associated to the q points, then necessarily all the q points lie on a hypersurface of degree $m$. The converse is the necessity statement of Serret's theorem.\end{proof}

\begin{theorem} 
\label{first12}
If $\left[\xi_1,\xi_2,\ldots,\xi_d\right]$ are the coordinates of a point on the smooth projective hypersurface $X$ of equation $F(\xi_1,\xi_2,\ldots,\xi_d)=0$ and of degree $n\geqslant2$, expressed as functions of uniformizing parameters $t_1,t_2,\ldots,t_{d-1}$, then any real analytic solution of the equation \begin{equation}
\label{f3M1}
\displaystyle F\left(\dfrac{\partial}{\partial x_1},\dfrac{\partial}{\partial x_2},\ldots,\dfrac{\partial}{\partial x_d}\right)\phi(x_1,x_2,\ldots,x_d)=0, \,d\geqslant4
\end{equation}
 on a sufficiently small open set can be put in the form
\begin{equation}
\label{f501}
\displaystyle V=\int \Phi(\xi_1 x_1+\xi_2 x_2+\ldots+\xi_d x_d, t_1,t_2,\ldots,t_{d-1})dt_1dt_2\ldots dt_{d-1},
\end{equation}
for a suitable region of integration.
\end{theorem}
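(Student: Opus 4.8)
The plan is to follow, \emph{mutatis mutandis}, the proof of \theoref{second}, trading the plane-curve tools (Riemann--Roch, Cayley--Bacharach) for their higher-dimensional substitutes: the dimension $\dim S(X)_m$ of the graded piece of the homogeneous coordinate ring computed at the beginning of this section, together with Serret's \theoref{Serret2} and \corref{Serret3}. First I would reduce to the homogeneous case. Assuming $V$ is real analytic near the origin, expand it as an absolutely and uniformly convergent series $V=\sum_{m\geqslant0}V_m$ of homogeneous components and apply $F(\partial/\partial x_1,\ldots,\partial/\partial x_d)$ term by term. This operator maps $\C[x_1,\ldots,x_d]_m$ into $\C[x_1,\ldots,x_d]_{m-n}$ and is surjective there (under the apolarity pairing its transpose is multiplication by the nonzero polynomial $F$, which is injective), so the space $W_m$ of homogeneous degree-$m$ solutions of \eqref{f3M1} has dimension exactly $M_m:=\binom{m+d-1}{d-1}-\binom{m-n+d-1}{d-1}$ when $m\geqslant n$ (which is $\dim S(X)_m$), and $M_m:=\binom{m+d-1}{d-1}$ when $m<n$ (where $I(X)_m=0$ and $W_m=\C[x]_m$ for degree reasons). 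The task is then to represent an arbitrary element of each $W_m$ in the form \eqref{f501} and to sum over $m$.

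The core step is to exhibit a basis of $W_m$ consisting of $m$-th powers $(\xi_1x_1+\cdots+\xi_dx_d)^m$ of linear forms attached to points $[\xi]\in X$. For $[\xi]\in X$ one has $F(\partial/\partial x)(\xi\cdot x)^m=\tfrac{m!}{(m-n)!}F(\xi)(\xi\cdot x)^{m-n}=0$ (for $m<n$ this vanishes for degree reasons), so each such power lies in $W_m$. Choose $M_m$ points $P_1,\ldots,P_{M_m}$ of $X$ in general position within the domain of the uniformizing parameters; since $X$ is nondegenerate, the evaluation map $S(X)_m\to\C^{M_m}$ is then an isomorphism. Consequently, through any $M_m-1$ of these points there passes a hypersurface $\{f=0\}$ of degree $m$ not containing $X$, while no degree-$m$ hypersurface avoiding $X$ passes through all $M_m$ of them. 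Feeding a putative relation $\sum_i\lambda_i(\xi^{(i)}\cdot x)^m=0$ into $f(\partial/\partial x)$ and using $f(\partial/\partial x)(\xi\cdot x)^m=m!\,f(\xi)$, with $f$ chosen to vanish at $P_2,\ldots,P_{M_m}$, leaves $\lambda_1 f(\xi^{(1)})=0$ with $f(\xi^{(1)})\neq0$; hence $\lambda_1=0$, and by symmetry the $M_m$ powers are linearly independent and therefore a basis of $W_m$. (If desired, the existence of such a basis — and the corresponding syzygies among the $m$-th powers associated with $M_m+1$ points — can be phrased instead through \theoref{Serret2} and \corref{Serret3}, precisely as Cayley--Bacharach and Serret are used in the proof of \theoref{second}; this is the correct substitute here, since for $d\geqslant4$ two hypersurfaces in $\C P_{d-1}$ no longer meet in a finite set.)

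Next, as in \eqref{f9}, write $(\xi(t)\cdot x)^m=\sum_{i=1}^{M_m}(\xi^{(i)}\cdot x)^m\,\mu_i(t)$, where $t=(t_1,\ldots,t_{d-1})$ are the uniformizing parameters; the $\mu_i$ are holomorphic in $t$ since $\mu_i(t)=\chi_i\big((\xi(t)\cdot x)^m\big)$ for $\chi_i$ the dual basis and $[\xi(t)]$ depends holomorphically on $t$. The $\mu_1,\ldots,\mu_{M_m}$ are $\C$-linearly independent, for otherwise evaluating at $M_m$ generic parameter values would display the corresponding generic powers as linearly dependent, contradicting the previous paragraph. A multivariable version of \lemref{Stone} — proved identically, via the entire functions $F_k(s)=\int\mu_k(t)e^{s\cdot t}\,dt$, their linear independence through Stone--Weierstrass in several variables, and a non-vanishing Wronskian-type minor — then produces functions $f_1,\ldots,f_{M_m}$ with $\det\big(\int\mu_i(t)f_s(t)\,dt\big)\neq0$. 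Choosing constants $\lambda_s$ so that the quantities $\sum_s\lambda_s\int\mu_i f_s\,dt$ take any prescribed values $p_1,\ldots,p_{M_m}$, we get $\int(\xi(t)\cdot x)^m g_m(t)\,dt=\sum_i p_i(\xi^{(i)}\cdot x)^m$ with $g_m=\sum_s\lambda_s f_s$; since every element of $W_m$ is of the form $\sum_i p_i(\xi^{(i)}\cdot x)^m$, each homogeneous degree-$m$ solution equals $\int(\xi(t)\cdot x)^m g_m(t)\,dt$.

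Finally one sums over $m$: $V=\sum_m V_m=\sum_m\int(\xi(t)\cdot x)^m g_m(t)\,dt$, and, provided the $g_m$ are controlled uniformly in $m$, the series $\sum_m(\xi(t)\cdot x)^m g_m(t)$ converges uniformly on compact subsets of the product of a region in the parameter domain with a small ball about the origin in $x$-space; interchanging summation and integration yields $V=\int\Phi(\xi_1x_1+\cdots+\xi_dx_d,t_1,\ldots,t_{d-1})\,dt_1\cdots dt_{d-1}$ with $\Phi(u,t)=\sum_m g_m(t)u^m$. I expect the main obstacle to be exactly this last point, as in the two-variable case: one must select the auxiliary functions $f_s$ (hence the $g_m$) and the region of integration with bounds uniform in $m$, compatibly with the merely local nature of the holomorphic parametrization of $X$, so that the term-by-term integration is legitimate on a fixed polydisk. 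Everything else is the algebra of \theoref{second} transported verbatim to $d$ variables.
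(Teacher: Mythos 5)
Your proposal follows essentially the same route as the paper's proof: expansion into homogeneous components, the dimension count $M_m=\binom{m+d-1}{d-1}-\binom{m-n+d-1}{d-1}$ via $\dim I(X)_m$, a basis of $W_m$ made of $m$-th powers of linear forms at $M_m$ well-chosen points of $X$ (established by applying $f(\partial/\partial x)$ for a degree-$m$ hypersurface through all but one point and not containing $X$), the dual-basis functions $\mu_i(t)$, a several-variable analogue of \lemref{Stone}, and term-by-term integration. The minor additions (apolarity for the surjectivity of $F(\partial/\partial x)$, and your explicit flag that the uniform control of the $g_m$ is the delicate point in the final summation) are refinements of, not departures from, the paper's argument.
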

\begin{proof}
To prove this we choose for origin a point in the vicinity of which $V$ is a real analytic function of $x_1$, $x_2,\ldots,$ $x_d$: we can expand $V$ as a power series in $x_1$, $x_2,\ldots,$ $x_d$ converging absolutely and uniformly within a certain region.

Operating on this series with $F\left(\dfrac{\partial}{\partial x_1},\dfrac{\partial}{\partial x_2},\ldots,\dfrac{\partial}{\partial x_d}\right)$ and equating to zero the coefficients of the various powers: we have when $m\geqslant n$ (we look at the homogeneous part of $F\left(\dfrac{\partial}{\partial x_1},\dfrac{\partial}{\partial x_2},\ldots,\dfrac{\partial}{\partial x_d}\right)V$ of degree $m$) 
$$\dbinom{m-n+(d-1)}{d-1}$$
relations among the coefficients of the homogeneous parts of degree $m$ as the action of  $F\left(\dfrac{\partial}{\partial x_1},\dfrac{\partial}{\partial x_2},\ldots,\dfrac{\partial}{\partial x_d}\right)$ on $V(x_1,x_2,\ldots,x_d)$ will leave a polynomial of degree $m-n$ and $\dbinom{m-n+(d-1)}{d-1}$ represents the dimension of the space of homogeneous polynomials of degree $m-n$ in $d$ variables. But when $m<n$ no such relations, because the operation of $F$ on the homogeneous parts of $V(x_1,x_2,\ldots,x_d)$ of degree $m\less n$ will kill them entirely.

As the dimension of the space of homogeneous polynomials of degree $m$ in $d$ variables is $\dbinom{m+(d-1)}{(d-1)}$, the terms of order $m$ are a linear combination of at most
$$M_1=\dbinom{m+(d-1)}{(d-1)}-\dbinom{m-n+(d-1)}{d-1},$$
linearly independent elements when $m\geqslant n$; and of at most $\dbinom{m+(d-1)}{d-1}$ independent terms when $m<n$.

In order to express the terms of order $m$ in the form \eqref{f501} in the case $m\geqslant n$ we proceed as follows. Take $M_1$ arbitrary points on the hypersurface $F(\xi_1,\xi_2,\ldots,\xi_n)=0$ (belonging to the same domain $\mathscr{D}$ of the uniformizing parameters $t_1,\ldots,t_{d-1}$); then the corresponding quantities $(\xi_1 x_1+\xi_2 x_2+\ldots+\xi_d x_d)^m$ will in general be linearly independent.

For, if not, there would be a linear relation between them; let it be
\begin{equation}
\label{f601}
\displaystyle\sum_1^{M_1}\lambda_i(\xi_1^i x_1+\xi_2^i x_2+\ldots+\xi_d^i x_d)^m=0.
\end{equation}
 Leaving out one of the points, say $(\xi_1^1,\xi_2^1,\ldots,\xi_n^1)$, we can draw a hypersurface of the $m$-th order $f(\xi_1,\xi_2,\ldots,\xi_n)=0$ through the remainder and not vanishing identically on $X$. Indeed with the notations introduced above we have
$$\dim(I(X)_m)=\dbinom{m-n+d-1}{d-1}$$
and this number is positive for $d\geqslant4$ and $m\geqslant n$. This quantity is the dimension of the space of hypersurfaces of degree $m$ containing $X$. Since we are considering $M_1-1$ points of $X$, the codimension of the space of forms of degree $m$ in the variables $z_1,z_2,\ldots,z_d$ in the space of forms of degree $m$ in those variables and vanishing on $X$ is at most $M_1-1$. So the dimension of the space of forms of degree $m$ in the $d$ variables $z_1,z_2,\ldots,z_d$ and passing through the $M_1-1$ points is at least $\dim\C[z_1,z_2,\ldots,z_d]_m-M_1+1=\dbinom{m-n+(d-1)}{d-1}+1$. So there is at least one non-zero homogeneous polynomial $f$, of degree $m$ in the $d$ variables vanishing on the chosen $M_1-1$ points but not vanishing entirely on $X$. 

We now operate on the equation \eqref{f601} with $f\left(\dfrac{\partial}{\partial x_1},\dfrac{\partial}{\partial x_2},\ldots,\dfrac{\partial}{\partial x_d}\right)$. Then the terms corresponding to the points disappear on account of the relation
\begin{equation}
\label{f701}
\displaystyle f\left(\dfrac{\partial}{\partial x_1},\dfrac{\partial}{\partial x_2},\ldots,\dfrac{\partial}{\partial x_d}\right)(\xi_1 x_1+\xi_2 x_2+\ldots+\xi_d x_d)^m=m!f(\xi_1 ,\xi_2 ,\ldots,\xi_d )
\end{equation}
and we are left with the equation
\begin{equation}
\label{f801}
\displaystyle\lambda_1f(\xi_1^1,\xi_2^1,\ldots,\xi_d^1)=0.
\end{equation}
Therefore either $f(\xi_1^1,\xi_2^1,\ldots,\xi_d^1)=0$, in which case all the points lie on a hypersurface of the $m$-th degree and they would not have been chosen arbitrarily; or $\lambda_1=0$. But $\lambda_1$ can be taken to be anyone of the coefficients; hence all the coefficients are zero and the syzygy or linear relation \eqref{f601} does not exist.

Thus we have $M_1$ independent solutions (or tangential equation) of the equation $$F\left(\dfrac{\partial}{\partial x_1},\dfrac{\partial}{\partial x_2},\ldots,\dfrac{\partial}{\partial x_d}\right)\phi(x_1,x_2,\ldots,x_d)=0.$$ 

In other words, there exists a linear relation between the $m$-th powers of the tangential equations of any $M_1+1$ points on the hypersurface, unless all but one of them lie on a hypersurface of the $m$-th degree, in which case the syzygy does not contain a point corresponding to the last point. 

Similarly, when $m<n$ we may take $\dbinom{m+(d-1)}{d-1}$ points on the hypersurface which do not lie on a hypersurface of the $m^{th}$ degree. This is possible by \corref{Serret3} and the fact that $m<n$. 

Having so chosen the $\dbinom{m+(d-1)}{d-1}$ points on $X$ we observe that the corresponding tangential equations are linearly independent. This follows from the fact that one can always draw a hypersurface of degree $m$ through any $\dbinom{m+(d-1)}{d-1}-1$ given points on $X$. 

Again we have the corresponding assertion that when $m<n$ there is a linear relation between the $m^{th}$ powers of the tangential equations of any $\dbinom{m+(d-1)}{d-1}+1$ points on the curve, and which satisfy the equation \eqref{f3M1}. 

So the conclusion of what we have said so far is that we can find a basis of the space of homogeneous polynomials which satisfy \eqref{f3M1} in the form $(\xi_1^i x_1+\xi_2^i x_2+\ldots+\xi_d^i x_d)^m,\,1\leqslant i\leqslant r$ for $r$ well-chosen points on $X$ with $r=M_1$ in case $m\geqslant n$ and $r=\dbinom{m+(d-1)}{d-1}$ when $m<n$.

Let us take the two cases together, and denote by $r$ the number of independent solutions, we see that
\begin{equation}
\label{f901}
\begin{split}
\displaystyle&(\xi_1(t_1,t_2,\ldots,t_{d-1}) x_1+\xi_2(t_1,t_2,\ldots,t_{d-1}) x_2+\ldots+\xi_d(t_1,t_2,\ldots,t_{d-1})x_d)^m\\&=\sum_1^r(\xi_1^i x_1+\xi_2^i x_2+\ldots+\xi_d^i x_d)^m\mu_i(t_1,t_2,\ldots,t_{d-1})
\end{split}
\end{equation}
is a solution of the pde \eqref{f3M1} with $\left[\xi_1,\xi_2,\ldots,\xi_d\right]\in X$, expressed as a function of the uniformizing parameters $t_1,t_2,\ldots,t_{d-1}$. 

Indeed when $m<n$ then $F\left(\dfrac{\partial}{\partial x_1},\dfrac{\partial}{\partial x_2},\ldots,\dfrac{\partial}{\partial x_d}\right)(\xi_1 x_1+\xi_2 x_2+\ldots+\xi_d x_d)^m=0$ for degree reasons. And in case $m\geqslant n$ we have

\begin{equation}\begin{split}&F\left(\dfrac{\partial}{\partial x_1},\dfrac{\partial}{\partial x_2},\ldots,\dfrac{\partial}{\partial x_d}\right)(\xi_1^i x_1+\xi_2^i x_2+\ldots+\xi_d^i x_d)^m\\&=F(\xi_1^i,\xi_2^i ,\ldots,\xi_d^i)(\xi_1^i x_1+\xi_2^i x_2+\ldots+\xi_d^i x_d)^{m-n}.\end{split}\end{equation}
The $\mu_i$ are analytic functions of $(t_1,t_2,\ldots,t_{d-1})$ because if $(\chi_i)_{1\leqslant i\leqslant r}$ is the basis dual to $\{(\xi_1^i x_1+\xi_2^i x_2+\ldots+\xi_d^i x_d)\}_{1\leqslant i\leqslant r}$ then $\chi_i((\xi_1(t_1,t_2,\ldots,t_{d-1}) x_1+\xi_2(t_1,t_2,\ldots,t_{d-1}) x_2+\ldots+\xi_d(t_1,t_2,\ldots,t_{d-1})x_d)^m)=\mu_i(t_1,t_2,\ldots,t_{d-1})$.

Let us take the $\mu_i$ as in \eqref{f901}; we remark that the $\mu_i$ are linearly independent over $\C$ as functions of $(t_1,t_2,\ldots,t_{d-1})$ by a reasoning similar to the one given in the proof of \theoref{second}.

We consider $r$ functions $(f_s)_{1\leqslant s\leqslant r}$ of $d$ variables $(t_1,t_2,\ldots,t_{d-1})$. We have

\begin{equation}
\label{f1001}
\begin{split}
\displaystyle &\int(\xi_1 x_1+\xi_2 x_2+\ldots+\xi_d x_d)^mf_s(t_1,t_2,\ldots t_{d-1})dt_1dt_2\ldots dt_{d-1}\\&=\sum_1^r(\xi_1^i x_1+\xi_2^i x_2+\ldots+\xi_d^i x_d)^{m}\int\mu_if_s(t_1,t_2,\ldots t_{d-1})dt_1dt_2\ldots dt_{d-1}
\end{split}
\end{equation}
and the vectors $(\displaystyle\theta_{s,i})_{1\leqslant i\leqslant r}:=(\int\mu_if_s(t_1,t_2,\ldots t_{d-1})dt_1dt_2\ldots dt_{d-1})_{1\leqslant i\leqslant r}$, where we have $s=1,\ldots,r$, will in general, be linearly independent (by a several variable lemma analogue to \lemref{Stone}).

Accordingly we may choose $r$ constants $\lambda_j$, so that the expressions
\begin{equation}
\label{f1101}
\displaystyle\lambda_1\theta_{1,i}+\lambda_2\theta_{2,i}+\ldots+\lambda_r\theta_{r,i}\qquad (i=1,2,\ldots,r)
\end{equation}
take any $r$ assigned values $p_1$,$\ldots$,$p_r$, and we have
\begin{equation}
\label{f1201}
\begin{split}
\displaystyle&\int (\xi_1 x_1+\xi_2 x_2+\ldots+\xi_d x_d)^m\sum_1^r\lambda_sf_s(t_1,t_2,\ldots,t_{d-1})dt_1dt_2\ldots dt_{d-1}\\&=\sum_1^rp_i(\xi_1^ ix_1+\xi_2^i x_2+\ldots+\xi_d^i x_d)^m.
\end{split}
\end{equation}
But any homogeneous polynomial of the degree $m$ solution of \eqref{f3M1} can be expressed in the form 
\begin{equation}
\label{f1301}
\displaystyle \sum_1^rp_i(\xi_1^ ix_1+\xi_2^i x_2+\ldots+\xi_d^i x_d)^m,
\end{equation}
and therefore admits the representation
\begin{equation}
\label{f1401}
\displaystyle \int(\xi_1 x_1+\xi_2 x_2+\ldots+\xi_d x_d)^mg_m(t_1,t_2,\ldots,t_{d-1})dt_1dt_2\ldots dt_{d-1}.
\end{equation}
The series $$\displaystyle\sum_{m\geqslant0}((\xi_1x_1+\xi_2(t_1,t_2,\ldots,t_{d-1}) x_2+\ldots+\xi_d x_d)^mg_m)(t_1,t_2,\ldots,t_{d-1})$$ converges uniformly on compact sets of the domain $\mathscr{D}$ of the uniformizing parameters $t_1$, $t_2$, $\ldots,t_{d-1}$ provided that $|x_1|+|x_2|+\ldots+|x_d|$ is small. Hence if we integrate on a compact submanifold $\mathscr{S}$ of $\mathscr{D}$, we can write 
\begin{equation}
\label{f1500}
\begin{split}
\displaystyle V&=\int\sum_{m\geqslant0}((\xi_1 x_1+\ldots+\xi_d x_d)^mg_m(t_1,\ldots,t_{d-1})dt_1\ldots dt_{d-1}\\
&=\int\Phi(\xi_1 x_1+\xi_2 x_2+\ldots+\xi_d x_d, t_1,t_2,\ldots,t_{d-1})dt_1dt_2\ldots dt_{d-1}.
\end{split}
\end{equation}
which is the desired form of the solution.

This being done for all values of $m$, our series for $V$ takes the required form
\begin{equation}
\label{f1501}
\displaystyle V(x_1,\ldots,x_d)=\int \Phi(\xi_1 x_1+\xi_2 x_2+\ldots+\xi_d x_d, t_1,t_2,\ldots,t_{d-1})dt_1dt_2\ldots dt_{d-1}.
\end{equation}
\end{proof}
\begin{example}
We treat the example of the partial differential equation 
$$\displaystyle\dfrac{\partial^{n-1}}{\partial x_1\partial x_2\ldots \partial x_{n-1}}\phi(x_1,x_2,\ldots,x_{n-1})=\dfrac{\partial^{n-1}}{\partial x_n^{n-1}}\phi(x_1,x_2,\ldots,x_n).$$
Using the following embedding of $(\C^{\times})^{n-2}$ into the associated characteristic variety of the equation: $z_1z_2\ldots z_{n-1}=z_n^{n-1}$
$$(t_1,t_2,\ldots,t_{n-2})\mapsto (t_1,t_2,\ldots,t_{n-2},\frac{1}{t_1t_2\ldots t_{n-2}},1)$$
we obtain an integral representation
$$\displaystyle\int_\mathscr{S}\Phi(x_1t_1,x_2t_2,\ldots x_{n-3}t_{n-2},\frac{x_{n-2}}{t_1t_2\ldots t_{n-2}},x_n,t_1,t_2,\ldots,t_{n-2})dt_{1}dt_{2}\ldots dt_{n-2}$$
for a suitable region $\mathscr{S}$ in $(\C^{\times})^{n-2}$.

We consider now the example of the partial differential equation
$$(\dfrac{\partial^3}{\partial x^3}+\dfrac{\partial^3}{\partial y^3}+\dfrac{\partial^3}{\partial z^3})\phi(x,y,z)=0.$$
We explain how a parametrization of the Fermat cubic $x^3+y^3=1$ arises, following Dixon \cite{dixon1890}. A natural idea to parametrize this curve is to find two meromorphic (eventually multivalued) functions $c(u)$, $s(u)$ such that
$$c^3(u)+s^3(u)=1.$$
We require, following \cite{dixon1890} the nonlinear differential system
\begin{equation}
\begin{split}
&s^\prime=c^2,\,\,c^\prime=-s^2\\
&s(0)=0,\,\,c(0)=1.
\end{split}
\end{equation}
These functions are analytic about the origin from the theory of ordinary differential equations. Besides 
$$s^3(u)+c^3(u)=1$$
because
$$(s^3+c^3)^\prime=3s^2c^2-3c^2s^2=0$$
and by making use of the initial conditions. So $(s(u),c(u))$ gives a parametrization of the curve $x^3+y^3=1$ near the point $(0,1)$. Dixon established that the functions are meromorphic in the whole of the complex plane and doubly periodic (that is, elliptic), hence they provide a global parametrization of the Fermat cubic. More precisely one can solve the differential system for $s(u)$ and $c(u)$ as usual by elimination
$$s^\prime=c^2\Longrightarrow s^{\prime\prime}=2cc^\prime=-2cs^2=-2s^2\sqrt{s^\prime}.$$
This gives
$$s^{\prime\prime}\sqrt{s^\prime}=-2s^2s^\prime.$$
Thus after one integration we have
$$\dfrac{2}{3}(s^\prime)^{3/2}=-\dfrac{2}{3}s^3+K.$$
Since $s(0)=0$, $s^\prime(0)=1$ we get $K=2/3$ and
$$(s^{\prime})^{3/2}=-s^3+1.$$
This gives 
$$\dfrac{ds}{du}=(1-s^3)^{2/3},$$
$$\dfrac{du}{ds}=(1-s^3)^{-2/3}\Longrightarrow du=(1-s^3)^{-2/3}ds.$$
So
$$\displaystyle u=\int_0^udz=\displaystyle\int_0^{s(u)}\dfrac{dt}{(1-t^3)^{2/3}}.$$
Similarly we obtain
$$u=\int_0^udz=\displaystyle\int_{c(u)}^1\dfrac{dt}{(1-t^3)^{2/3}}.$$
Expanding $(1-t^3)^{2/3}$ near $0$ (respectively near $1$) and using Lagrange inversion we arrive at the formulas given by
\begin{equation}
\begin{split}
&s(u)=u-4\frac{u^4}{4!}+160\frac{u^7}{7!}-20800\frac{u^{10}}{10!}+6476800\frac{u^{13}}{13!}-\ldots\\
&c(u)=1-2\frac{u^3}{3!}+40\frac{u^6}{6!}-3680\frac{u^{9}}{9!}+8880000\frac{u^{12}}{12!}-\ldots.
\end{split}
\end{equation}
Using this we have an integral representation for the solution
$$V(x,y,z)=\displaystyle\int_\gamma\Phi(c(u)x+s(u)y-z,t)dt$$
for a suitable path $\gamma$.
\end{example}

\section{Appendix 1: The twistor correspondence and solutions of differential equations}
\label{twistor}
In this appendix we present an interesting approach to the determination of the solutions of linear partial differential equations due to \cite{murray1985}.
Let $f(z_0,z_1,\cdots,z_n)$ be a complex homogeneous polynomial of degree $k>1$ in the indicated variables with $n>1$. In this section we will show how to use a general twistor correspondence to describe all the solutions of
\begin{equation}
\label{p2}
\displaystyle D_f\phi:=f\left(\dfrac{\partial}{\partial z_0},\ldots,\dfrac{\partial}{\partial z_n}\right)\phi(z_0,\ldots,z_n)=0.
\end{equation} 
We will do so by showing that there is a twistor space $Z$, a vector space $H^{n-1}(Z,\mathcal{O}(-n-1+k))$ of Dolbeault cohomology classes and a twistor correspondence
\begin{equation}
\label{p3}
T:H^{n-1}(Z,\mathcal{O}(-n-1+k))\to H^0(\C^{n+1},\mathcal{O}),
\end{equation} 
whose image is the space of solutions of \ref{p2}.
Here and in what follows we will denote by $\mathcal{O}$ is the sheaf of holomorphic functions on $\C^m$ for some $m\in \N^\times$, and we will show that $T$ is an injective map, obtained by integrating the cohomology class against "a cycle", and onto the space of analytic functions in the kernel of $D_f$ for all $k>1$. 

To begin with, we recall that the complex projective space $\C P_n$ is the set of all lines through the origin in $\C^{n+1}$, namely the set of all $\left[z\right]$, $z\in\C^{n+1}$-$\{0\}$ with
$$\displaystyle\left[z\right]:=\left[z_0:z_1:\ldots:z_n\right]=\{\lambda z,\lambda\in\C^{\times}\}.$$
It is a compact complex manifold with a covering family of charts given by the open sets $U_i$ where the $i$-th homogeneous coordinate is non-zero.
\begin{equation*}
\label{p4}
\displaystyle\begin{split}
U_i&\to \C^n\\
\left[z_0:z_1:\ldots:z_n\right]&\mapsto\left(\dfrac{z_0}{z_i},\ldots,\dfrac{z_{i-1}}{z_i},\dfrac{z_{i+1}}{z_i},\ldots,\dfrac{z_n}{z_i}\right).
\end{split}
\end{equation*}
As in the previous sections let $H$ be the tautological line bundle over $\C P_n$ whose fibre over a point $\left[z\right]\in \C P_n$ is just the line $\left[z\right]$, i.e.
\begin{equation}
\label{p5}
\displaystyle H=\{(\left[z\right],w),w=\lambda z,\lambda\in \C^\times\}\subset\C P_n\times \C^{n+1}.
\end{equation}
We define local sections $\psi_i:U_i\to H$ for each $i=0,\ldots,n$ by
\begin{equation}
\label{p6}
\displaystyle\begin{split}
&U_i\to H\\
&\psi_i(\left[z\right])=\left(\left[z\right],\left(\dfrac{z_0}{z_i},\ldots,\dfrac{z_{i-1}}{z_i},1,\dfrac{z_{i+1}}{z_i},\ldots,\dfrac{z_n}{z_i}\right)\right).
\end{split}
\end{equation}
Note that $\psi_i$ is $H$-valued because
$$\left(\dfrac{z_0}{z_i},\ldots,\dfrac{z_{i-1}}{z_i},1,\dfrac{z_{i+1}}{z_i},\ldots,\dfrac{z_n}{z_i}\right)=\dfrac{1}{z_i}\left(z_0:z_1:\ldots:z_n\right).$$
From the definition of the $\psi_i$ we have that
$$\psi_i(\left[z\right])=\dfrac{z_j}{z_i}\psi_j(\left[z\right])$$
and hence the transition functions of $H$ are 
$$g_{ij}=\dfrac{z_j}{z_i}.$$
We set $\mathcal{O}(-1):=H$. For $p>0$ we define $\mathcal{O}\left(p\right)=\displaystyle\bigotimes_1^pH^\star$, with $H^\star$ the line bundle dual to $H$. We also define $\displaystyle\mathcal{O}\left(p\right)=\bigotimes_1^{-p}H$, for $p<0$ and $\mathcal{O}(0)$ as the trivial line bundle on $\C P_n$. 

For a given sheaf $\mathcal{S}$ on $\C P_n$, we denote by $H^p(\C P_n,\mathcal{S})$ its $p$-th cohomology group $p\geqslant 0$. The dimension of the vector space $H^p(\C P_n,\mathcal{S})$ (which is finite by Hodge theory) is denoted by 
$h^p(\C P_n,\mathcal{S})$. We have the following formulas of Bott \cite[p. 4]{okonek2010}
\begin{equation}
\label{p1015}
h^q(\C P_n,\mathscr{O}(k))=\begin{cases}
\dbinom{k+n}{k},\,k\geqslant0,\,q=0\\
\dbinom{-k-1}{-k-1-n},\,q=n,\,k\leqslant-n-1\\
0\,\,\,\text{otherwise}.
\end{cases}
\end{equation}

We have \cite[p. 165]{griffiths1978}
$$Sym^d((\C^{n+1})^\star)\simeq H^0(\C P_n,\mathscr{O}(d))$$
here $Sym^d((\C^{n+1})^\star)$ is the space of homogeneous polynomials of degree $d$ in the variables $z_0, \ldots,z_n$. Moreover from \cite[p. 135]{griffiths1978} one knows that to any global section in $H^0(\C P_n,\mathscr{O}(d))$, one can associate an effective divisor which is precisely where the considered section vanishes.
\begin{definition}
A divisor $D$ is a locally finite formal linear combination 
$$D=\sum a_i V_i$$
of irreducible analytic varieties $V_i$. If all the $a_i\geqslant0$, $D$ is called effective.
\end{definition}
Let us now go back to the homogeneous polynomial $f(z_0,z_1,\ldots,z_n)$ of degree $k$ we started with.  From what we have explained we can associate to it a section $f(\xi_0,\ldots,\xi_n)$ of $\mathscr{O}(k)$ which vanishes exactly on an effective divisor denoted $X$.

We then have the exact sequence of sheaves
\begin{equation}
\label{p1016}
\begin{CD}
	\displaystyle0 @>>> \mathscr{O}(-k)@>>> \mathscr{O}_{\C P_n} @>>> \mathscr{O}_X @>>> 0.
	 \end{CD}\end{equation} 
By taking the tensor product of the sequence \eqref{p1016} with the locally free sheaf $\mathscr{O}(p)$ and using the Bott's formulas we get (by the long exact sequence in cohomology) the short exact sequences

\begin{equation}
\label{p1017}
\begin{CD}
	\displaystyle0 @>>> H^0(\C P_n,\mathscr{O}(p-k))@>>> H^0(\C P_n,\mathscr{O}(p)) @>>> H^0(X,\mathscr{O}(p)) @>>> 0.
	 \end{CD}\end{equation} 
and
\begin{equation}
\label{p1018}
\begin{CD}
	\displaystyle0 @>>> H^{n-1}(\C P_n,\mathscr{O}(p))@>>> H^n(\C P_n,\mathscr{O}(p-k)) @>>> H^n(\C P_n,\mathscr{O}(p)) @>>> 0
\end{CD}\end{equation} 

Set $p=-n-1=k$ in \eqref{p1018}. Then since $k>0$ we have $-n-1+k>-n-1$ and from formulas \eqref{p1015} we get
$$H^{n-1}(\C P_n,\mathscr{O}(-n-1+k))\simeq H^{n}(\C P_n,\mathscr{O} (-n-1))\simeq\C$$
where the last isomorphism also follows from \eqref{p1015}.

If we assume $X$ to be smooth, this isomorphism can be realized by integrating smooth $(0,n-1)$-forms with values in $\mathscr{O}(-n-1+k)$. 
 \begin{equation}
 \label{p1019}
 \begin{CD}
	\displaystyle0 @>>> \mathscr{O}(-n-1+k)@>>> \xi^{0,\bullet} \otimes\mathscr{O}(-n-1+k).	 
\end{CD}
\end{equation}
Recall now that $H^0(\C P_n,\mathcal{O}(1))$ is the space of homogeneous polynomials of degree $1$ in $\C^{n+1}$, and let $\xi_0,\ldots,\xi_n$ be a basis of $H^0(\C P_n,\mathcal{O}(1))$. We then have an isomorphism
\begin{equation}
\label{p7}
\displaystyle
\begin{split}
\C^{n+1}&\to H^0(\C P_n,\mathcal{O}(1))\\
(x_0,\ldots,x_n)&\mapsto\sum_{i=0}^{n}x_i\xi_i.
\end{split}
\end{equation} 
Moreover the restriction map
\begin{equation}
\label{p8}
H^0(\C P_n,\mathcal{O}(1))\to H^0(X,\mathcal{O}(1))
\end{equation}
is an isomorphism by Lefschetz's hyperplane theorem, and therefore if $X$ is the divisor defined by $f$ we obtain
\begin{equation}
\label{p9}
\C^{n+1}=H^0(\C P_n,\mathcal{O}(1))=H^0(X,\mathcal{O}(1)).
\end{equation}
\begin{remark}
If $E$ is a vector bundle over a manifold $M$ then there is a sequence of induced jet bundles $J^pE$, $p=0,1,\ldots$. The fibre of $J^pE$ at $m\in M$ is the vector space of all $p$-jets at $m$ or of equivalence classes of local sections about $m$ where two sections are equivalent if they have the same Taylor series up to order $p$. This equivalence relation is independent of the charts necessary to define it and the fibers fit together to form smooth vector bundles. There is a natural projection from $J^pE$ to $J^{p-1}E$ which "forgets" the $p$-th term in the Taylor series. Associated with the jet bundles one has the following exact sequence
\begin{equation}
\label{p10}
\begin{CD}
	\displaystyle0 @>>> S^pT^\star M\bigotimes E@>>> J^pE @>>> J^{p-1}E @>>> 0.
\end{CD}\end{equation} 
In this invariant language a $p^{th}$ order differential operator acting on sections of $E$ is just a bundle map
\begin{equation}
\label{p11}
D:J^pE\to E.
\end{equation}
When $E=\C^{n+1}\times \C$, $J^pE$ will be denoted $J^p\C$ and \eqref{p10} becomes
\begin{equation}
\label{p12}
\begin{CD}
	\displaystyle0 @>>> (\C^{n+1}\times S^p\C^{n+1})@>>> J^p\C @>>> J^{p-1}\C @>>> 0,
\end{CD}\end{equation} 
where $\C^{n+1}$ is identified with its dual. If $p=k$, then the natural flat connection on $\C^{n+1}$, which is just the exterior derivative, defines a splitting $J^k\C\to(\C^{n+1}\times S^k\C^{n+1})$ and composing with the projection on $\C^{n+1}$ followed by the composition with $f$
 defines the homogeneous polynomial differential operator 
 \begin{equation}
 \label{p13}
 D_f:J^k\C\to \C.
 \end{equation}
 This is how one should interpret the differential operator given in equation \eqref{p2}.
 \end{remark}
 Now let us define the twistor space $Z$ as the total s(ace of the line bundle $\mathcal{O}(1)$ on $\C P_n$ restricted (or pulled-back) to $X$. 
 
 The relation between $Z$ and $\C^{n+1}$ revolves around the following double fibration where we identify $\C^{n+1}$ with $H^0(X,\mathcal{O}(1))$ via the isomorphism \eqref{p9}.
\begin{equation}
\label{p14}
\vcenter{
\xymatrix{\displaystyle
& \C^{n+1}\times X\ar[ld]_\mu \ar[rd]^\nu \\
\C^{n+1}  & & Z }
}
\end{equation}
If $\displaystyle(\sum_0^nx_i\xi_i,z)\in\C^{n+1}\times X$, then the left hand map sends it to $(x_1,\ldots,x_n)$ and the right hand arrow sends it to $\displaystyle\sum_0^n\xi_i(z)$. If we fix a point $x\in\C^{n+1}$ then the image of the section
\begin{equation}
\label{p15}
\displaystyle \sum_0^nx_i\xi_i:X\to Z
\end{equation}
is a subvariety $X_x$ of $Z$ which is identified with $X$ by the projection map $\pi:Z\to X$. We will need various line bundles on $Z$. We use the mappping $\pi$ to pull-back line bundles from $X$. That is if $L$ is a line bundle on $X$, we define a line bundle $\pi^\star L$ on $Z$ by $(\pi^\star L)_z=L_{\pi(z)}$ (the fiber of $(\pi^\star L)$ over $z$ is by definition given by $L_{\pi(z)}$). Notice that a peculiar thing happens for the bundle $\mathcal{O}(1)$. Here if $z$ is an element of $Z$ then because $Z$ is itself the line bundle $\mathcal{O}(1)$, $z\in\mathcal{O}(1)_{\pi(z)}$, that is $z\in(\pi^\star\mathcal{O}(1))_z$. We denote this section of $\mathcal{O}(1)$, the one that sends $z$ to $z$, by $\eta$. It is the section whose divisor is the zero section $Z_0$ of the line bundle $Z\overset{\pi}{\to}X$. Notice that the subvariety $X_x$ in this notation is the subset of $Z$ where
\begin{equation}
\label{p16}
\displaystyle\eta=\sum_0^n\xi_ix_i.
\end{equation} 
Let $\omega_l,\, l=0,\ldots,m$ be $(0,n-1)$ forms on $X$ with values in $\mathcal{O}(-n-1+k-l)$. Consider now the $(0,n-1)$ form on $Z$ with values in $\mathcal{O}(-n-1+k)$ defined by
\begin{equation}
\label{p17}
\displaystyle\omega=\sum_0^m\pi^\star(\omega_l)\eta^l.
\end{equation}
Because each $X_x$ is a copy of $X$ we can restrict $\omega$ to $X_x$ and integrate. We obtain
\begin{equation}
\label{p18}
\displaystyle
\begin{split}
\int_{X_x=\pi^{-1}(X)}\omega&=\int_{X_x}\sum_0^m\pi^\star(\omega_l)\eta^l\\
&=\int_{X_x}\sum_0^m\pi^\star(\omega_l)(\sum_0^n\xi_ix_i)^l\\
&=\int_X\sum_0^m\omega_l(\sum_0^n\xi_ix_i)^l=:\phi(x).
\end{split}
\end{equation}
This $\phi$ is clearly in $\ker D_f$, because if we apply a monomial differential operator 
$$\left(\dfrac{\partial}{\partial x_0}\right)^{i_0}\left(\dfrac{\partial}{\partial x_1^{i_1}}\right)^{i_1}\ldots\left(\dfrac{\partial}{\partial x_n}\right)^{i_n}$$
to $\phi(x)$ we obtain
$$\displaystyle\int_X\sum_{l=k}^m\omega_ll(l-1)\ldots(l-k+1)(\sum_0^n\xi_ix_i)^{l-k}\xi^{i_1}\xi^{i_2}\ldots\xi^{i_k}.$$
Hence if we apply $D_f$ to $\phi(x)$ we obtain
$$\displaystyle\displaystyle\int_X\sum_0^m\omega_ll(l-1)\ldots(l-k+1)(\sum_0^n\xi_ix_i)^{l-k}f(\xi^{0},\xi^{1},\ldots,\xi^{k})$$
which vanishes because $f$ vanishes on $X$. 

In \cite{murray1985} it is shown that more generally one should integrate over $X_x$ elements belonging to $H^{n-1}(Z,\pi^\star\mathcal{O}(-n-1+k))$. 

Finally, as we indicated at the beginning of this section, one has in full generality the twistor transform 
\begin{equation}
\label{p19}
\begin{split}
T:H^{n-1}(Z,\pi^\star\mathcal{O}(-n-1+k))&\to H^0(\C^{n+1},\mathcal{O})\\
T(\omega)(x)&=\int_{X_x}\omega,
\end{split}
\end{equation}
which is a bijection onto the kernel of $D_f $ for $k\leqslant n$.
\begin{remark}
$H^{n-1}(Z,\pi^\star\mathcal{O}(-n-1+k))$ is an infinite dimensional vector space because $Z$ is non-compact and $-n-1+k<0$ as $k\leqslant n$. Also the appearance of $\mathcal{O}(-n-1+k)$ is due to the fact that the canonical bundle of $X$ is exactly $\mathcal{O}(-n-1+k)$. Moreover the fact that one should integrate $(0,n-1)$-forms is a consequence of the Dolbeault resolution which implies that $H^{n-1}(X,\mathscr{O}(-n-1+k))\simeq H^{n-1}(\Gamma(X,\mathcal{E}^{0,\bullet}(\mathscr{O}(-n-1+k))))$. Finally if $f(x^0,x^1,\ldots,x^n)$ is a homogenous polynomial then $f(\xi)=f(\xi^0,\xi^1,\ldots,\xi^n)$ defines a section of the line bundle $\mathscr{O}(k)$ which vanishes precisely on $X$, namely $\{f=0\}$. 
\end{remark}
Let us now see from what we have said in this section how the classical contour integral formula, for the Laplacian in three dimensions given in \cite{whittaker1927}, arises. First, from \cite{whittaker1927}, the general solution of the Laplacian in three dimensions is
\begin{equation}
\label{zwelve}
\phi(x,y,z)=\displaystyle\int_{-\pi}^\pi f(z+ix\cos u+iy\sin u,u)du
\end{equation}
for an arbitrary real analytic function $f$. In three dimensions the manifold $X$ is the quadric $Q$ which is the vanishing set in $\C P_2$ of $z^2_0+z_1^2+z_2^2$. We have $\C P_2\supseteq Q\simeq\C P_1$. From equation \eqref{p18} the general solution is 
\begin{equation}
\phi(x,y,z)=\displaystyle\int_{Q} \sum_0^\infty f_p(\xi)(x\xi_0+y\xi_1+z\xi_2)^pd\xi.
\end{equation}
Indeed from \eqref{p18}, and with the notations used before, $\displaystyle \sum_{0}^m\omega_p(\sum_0^n\xi_ix_i)^p$ is a $(0,1)$ form with values in $\mathscr{O}(-1)$. A crucial remark is that for any complex projective hypersurface $M=\{g=0\}$, of degree $k$ of $\C P^n$,  its canonical bundle with sheaf of sections given by the local holomorphic one forms on M, is isomorphic to $\mathscr{O}(-n-1+k)$. Therefore in the case at hand $\mathscr{O}(-1)$ is the canonical bundle of $Q$, and $\displaystyle \sum_{0}^m\omega_p(\sum_0^n\xi_ix_i)^p$ is a $(1,1)$-form on $Q$ (where we interpret $d\xi$ as a local $(1,0)$-form on $Q$).

But $f(\xi_0,\xi_1,\xi_2)=0$ precisely on $Q$. Thus we can identify $(\xi_0,\xi_1,\xi_2)$ as a variable point which lies on $Q$. The embedding of $\C$ in the quadric which will be used is 
$$w\mapsto\left[i(w^2+1),(w^2-1),2w\right].$$
Restricting to the circle this gives
$$u\mapsto\left[i(e^{2iu}+1),(e^{2iu}-1),2e^{iu}\right],\,u\in\left[-\pi,\pi\right]$$
where square brackets denote the point of $\C P_2$ which is the line through $(i(e^{2iu}+1),(e^{2iu}-1),2e^{iu})$. So $(x\xi_0+y\xi_1+z\xi_2)$ becomes
$$2e^{iu}(xi\cos u+yi\sin u+z).$$
Therefore we have
\begin{equation}
\begin{split}
\phi(x,y,z)&=\displaystyle\displaystyle\int_{-\pi}^\pi\sum_pf_p(u)2^pe^{ipu}(z+ix\cos u+iy\sin u)^ph(u)du\\&=\int_{-\pi}^\pi f(z+ix\cos u+iy\sin u,u)du
\end{split}
\end{equation}
by change of variables and this recovers the result \eqref{zwelve}.

We now note that the formula in \cite{whittaker1927} for a solution of the wave equation is 
\begin{equation}
\label{dreizehen}
\phi(x,y,z,t)=\displaystyle\int_{-\pi}^\pi\int_{-\pi}^\pi f(t+x\sin u\cos v+y\sin u\sin v+z\cos u,u,v)dudv.
\end{equation}
Again the hypersurface $X\subseteq\C P_3$ is the quadric $z^2_0+z_1^2+z_2^2-z_3^2=0$ which is simply $\C P_1\times \C P_1,$ which accounts for the double coutour integral in \eqref{dreizehen} and $Z=\mathscr{O}(1)|_X$. To obtain \eqref{dreizehen} one uses the $2$-$1$ ramified map given by
$$\C\times\C\to Q\subseteq \C P_3,$$
$$(\xi,\xi^\prime)\mapsto\left[(1+\xi^2)(-1+\xi^{\prime2}),-i(-1+\xi^2)(-1+\xi^{\prime2}),2i(1+\xi^{2})\xi^\prime,4i\xi\xi^\prime\right].$$
Setting $\xi=e^{iu},\,\xi^\prime=e^{iv}$ (we restrict to the product of circles) and using formula \eqref{p18}, we obtain
\begin{equation}
\begin{split}
\phi(x,y,z,t)&=\displaystyle\displaystyle\int_{\xi}\int_{\xi^\prime}\sum_pf_p(\xi,\xi^\prime)(2i(1+\xi^{2})\xi^\prime z+x(1+\xi^2)(-1+\xi^{\prime2})\\
&-i(-1+\xi^2)(-1+\xi^{\prime2})y+4i\xi\xi^\prime t)^pd\xi d\xi^\prime\\&=\int_{-\pi}^\pi\int_{-\pi}^\pi\sum_pf_p(u,v)(t+x\sin u\cos v+y\sin u\sin v+z\cos u)^pdudv\\
&=\displaystyle\int_{-\pi}^\pi\int_{-\pi}^\pi f(t+x\sin u\cos v+y\sin u\sin v+z\cos u,u,v)dudv.
\end{split}
\end{equation}

\end{document}